\theoremstyle{plain}
\newtheorem{theor}{Theorem}
\theoremstyle{remark}
\newtheorem{rem}{Remark}
\theoremstyle{plain}
\newtheorem{cor}[theor]{Corollary}
\newtheorem{lemma}[theor]{Lemma}
\def\E{{\mathbb E}}
\def\R{{\mathbb R}}
\def\P{{\mathbb P}}
\def\N{{\mathbb N}}
\def\Z{{\mathbb Z}}
\def\Exp{{\mathbb E}}
\def\M{{\mathcal M}}
\def\Proj{{\rm P}}
\def\spn{{\rm span}}
\def\B{{\rm BM}}
\def\conv{{\rm conv}}
\def\BStat{{\mathcal B}}
\def\BadBlocks{{\mathcal I}}
\def\d{{\rm dist}}
\def\S{S}
\def\Id{{\bf I}}
\def\Event{{\mathcal E}}
\title{Minimax of an $n$-dimensional Brownian motion}
\author{Konstantin Tikhomirov \ and \ Pierre Youssef\footnote{University of Alberta, Department of Mathematical and Statistical sciences. \hspace{3cm} \texttt{e-mail: \small ktikhomi@ualberta.ca ; pyoussef@ualberta.ca}}}
\begin{document}
\maketitle

\abstract{For some absolute constants $c$, $n_0$ and any $n\geq n_0$, we show that with probability close to one the 
convex hull of the $n$-dimensional Brownian motion $\conv\{\B_n(t):\, t\in[1,2^{cn}]\}$ does not contain the origin. 
The result can be interpreted as an estimate of the minimax of the Gaussian process $\{ \langle \bar{u},\B_n(t)\rangle,\, \bar{u}\in\S^{n-1},\, t\in [1,2^{cn}]\}$. 
}
\section{Introduction}


Our paper is motivated by the following question raised by I.~Benjamini and considered by R.~Eldan in \cite{MR3161524}:

\vskip 0.2cm
{\it
Let $t_1,t_2,\dots,t_N$ be points in $[0,1]$ generated by a homogeneous Poisson point process with intensity $\alpha$. 
Estimate the value $\alpha=\alpha_0$ such that the convex hull of $\B_n(t_i),i=1,2,\dots,N$, contains the origin with probability $1/2$.} 
\vskip 0.2cm

Here, $\B_n$ is the standard Brownian motion in $\R^n$.
Eldan \cite{MR3161524} showed that $\alpha_0$ satisfies  
\begin{equation}\label{eq-eldan}
e^{c_1n/\log n}\le \alpha_0\le e^{c_2n\log n},
\end{equation}
for some universal constants $c_1$ and $c_2$. 
Related results were obtained in \cite{MR3161524} for
the standard random walk on $\Z^n$ and the spherical Brownian motion. 
The right-hand side estimate in \eqref{eq-eldan} was recently improved to
$e^{c_2n}$ by the authors \cite{kostya-pierre}. In fact, \cite{kostya-pierre} 
provides a rather general method for estimating from below 
the probability of the event $0\in \{W(t)\}$ for various types of random walks $W$ in $\R^n$.
At the same time, the question of optimizing the lower bound for $\alpha_0$ in \eqref{eq-eldan}
remained open.

The main result of this paper is the following theorem.

\begin{theor}\label{main result}
There exist universal constants $c>0$ and $n_0\in\N$ with the following property: let $n\ge n_0$ and
$\B_n(t)$ ($0\le t<\infty$)
be the Brownian motion in $\R^n$. Then
$$\P\bigl\{0\in\conv\{\B_n(t):\,t\in [1,2^{cn}]\}\bigr\}\le \frac{1}{n}.$$
\end{theor}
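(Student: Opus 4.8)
The plan is to show that, with probability at least $1-\tfrac1n$, the origin can be strictly separated from the path by a hyperplane through $0$: since $\{\B_n(t):t\in[1,2^{cn}]\}$ is compact, the event $0\notin\conv\{\B_n(t):t\in[1,2^{cn}]\}$ coincides with the existence of $\bar u\in\S^{n-1}$ such that $\langle\bar u,\B_n(t)\rangle>0$ for all $t\in[1,2^{cn}]$. Write $T=2^{cn}$, $R(t)=\|\B_n(t)\|$ and $\Theta(t)=\B_n(t)/R(t)$. The first ingredient is a radial estimate: since $R(t)^2-nt$ is a martingale and $\|\B_n(t)\|^2$ concentrates sharply on each dyadic block $[2^k,2^{k+1}]$ (Gaussian/Bessel concentration together with a maximal inequality), a union bound over the $O(n)$ blocks gives that the event
$$\Event=\Bigl\{\tfrac12\,nt\le R(t)^2\le 2nt\ \text{ for all }t\in[1,T]\Bigr\}$$
has probability at least $1-\tfrac1{2n}$ (in fact $\P(\Event^c)$ is exponentially small in $n$).

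Next I would pass to the skew-product (It\^o--McKean) decomposition: for $t\ge1$ one has $\Theta(t)=\beta(\tau(t))$, where $\tau(t)=\int_1^t R(s)^{-2}\,ds$ and $\beta$ is a Brownian motion on $\S^{n-1}$, started at $\Theta(1)$ and independent of the radial process $(R(s))_{s\ge0}$. On $\Event$ we have $\tau(T)\le\tfrac2n\ln T=2c\ln 2=:\sigma_0$, an absolute constant. Because $\B_n(t)=R(t)\,\Theta(t)$ with $R(t)>0$, the origin lies in $\conv\{\B_n(t):t\in[1,T]\}$ exactly when $0\in\conv\{\Theta(t):t\in[1,T]\}$, i.e. when the arc $\beta([0,\tau(T)])$ is contained in no open hemisphere of $\S^{n-1}$; and on $\Event$ this arc is a subset of $\beta([0,\sigma_0])$. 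Hence
$$\P\bigl(0\in\conv\{\B_n(t):t\in[1,T]\}\bigr)\ \le\ \P(\Event^c)+\P\bigl(\beta([0,\sigma_0])\ \text{lies in no open hemisphere}\bigr),$$
so it suffices to prove that a Brownian motion on $\S^{n-1}$ run for a sufficiently small absolute time $\sigma_0$ has its trace contained in an open hemisphere with probability at least $1-\tfrac1{2n}$.

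For this angular statement I would discretize the clock, $\tau_j=j\sigma_0/M$ for $j=0,\dots,M$, and split the argument into two parts. \emph{(i) Continuity along the grid}: on each $[\tau_j,\tau_{j+1}]$ one has $\sup_{\tau\in[\tau_j,\tau_{j+1}]}\|\beta(\tau)-\beta(\tau_j)\|\lesssim\sqrt{\sigma_0/M}$ off an event of probability $\le e^{-c'n}$ per block (spherical-BM increment concentration), and on a short arc the map $\tau\mapsto\langle\bar u,\beta(\tau)\rangle$ differs from a sinusoid only by a controlled amount; thus, if the grid values $\langle\bar u,\beta(\tau_j)\rangle$ all exceed a threshold $\delta>0$ and $M$ is chosen with $\sqrt{\sigma_0/M}\ll\delta$, then $\langle\bar u,\beta(\tau)\rangle>0$ throughout $[0,\sigma_0]$, which reduces matters to the finite configuration $\{\beta(\tau_j)\}_{j\le M}$. \emph{(ii) A deep common witness}: one must produce $\bar u\in\S^{n-1}$ with $\langle\bar u,\beta(\tau_j)\rangle\ge\delta$ for every $j$, with $\delta$ bounded below by a constant depending only on $c$. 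I expect (ii) to be the main obstacle. The sampled points are strongly correlated, so a Wendel-type symmetrization is not directly available; the route I would pursue is a multi-scale/recursive scheme in which one conditions successively on $\beta$ restricted to the subintervals on which the angular clock advances by one ``mixing time'' $\asymp 1/n$, using at each stage the near-independence and near-uniformity of the freshly produced piece --- there are $\asymp n\sigma_0$ such pieces, a small multiple of $n$ when $\sigma_0$ is small, which is exactly what keeps the analogue of Wendel's count $2^{1-m}\sum_{k<n}\binom{m-1}{k}$ close to $1$ --- to update a running witness direction while controlling, scale by scale, the loss $\|\bar u_{j+1}-\bar u_j\|$. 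The accumulated failure probability over the $\asymp n\sigma_0$ scales must be shown to stay below $\tfrac1{2n}$, and it is this bookkeeping that forces $\sigma_0$, hence $c$, to be a sufficiently small absolute constant. Combining (i) and (ii) with the radial event $\Event$ produces the required separating direction, and hence the theorem.
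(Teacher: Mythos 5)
Your reduction (radial concentration $\|\B_n(t)\|^2\approx nt$ on $[1,2^{cn}]$, then the It\^o--McKean skew product sending the problem to a spherical Brownian motion $\beta$ run for the constant clock time $\sigma_0\asymp c$) is correct as far as it goes, but it is only a reformulation: the statement ``$\beta([0,\sigma_0])$ lies in an open hemisphere with probability $\ge 1-\tfrac1{2n}$ for small absolute $\sigma_0$'' is precisely the hard content of the theorem, in the form in which Eldan studied it, and his methods lose a $\log n$ in the exponent. Everything in your step (ii) is therefore the actual proof, and what you offer there is a hope rather than an argument. Moreover, the quantitative interface between your steps (i) and (ii) is broken. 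First, a spherical BM increment over a clock interval of length $h$ has Euclidean size $\asymp\sqrt{nh}$, not $\sqrt{h}$, so your grid needs $M\gg n\sigma_0/\delta^2$ points. Second, and more seriously, the threshold $\delta$ cannot be an absolute constant: in clock time $\sigma_0$ the process decorrelates $\asymp n\sigma_0$ times (the linear harmonics decay like $e^{-(n-1)h/2}$), producing $m\asymp n\sigma_0$ sampled directions whose Gram matrix has bounded condition number, and then $\sum_j\langle \bar u,\beta(\tau_j)\rangle^2\lesssim 1$ forces $\min_j\langle \bar u,\beta(\tau_j)\rangle\lesssim 1/\sqrt{n}$ for \emph{every} $\bar u\in\S^{n-1}$. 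With $\delta\asymp n^{-1/2}$ your crude ``grid plus modulus of continuity'' scheme requires $M\gtrsim n^2$ constraints and no longer closes; one is forced into exactly the fine multi-scale analysis you were trying to avoid. (The paper's margins are indeed of this order: $\langle\bar n,\B_n(t)\rangle\gtrsim\sqrt t$, i.e.\ $\asymp 1/\sqrt n$ after normalizing $\B_n(t)$.)

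The missing mechanism is how to \emph{construct} the common witness and keep it measurable with respect to only part of the randomness. The paper does this in the Euclidean picture: it splits the coordinates of $\R^n$ into $(M+1)\times M'$ disjoint blocks $J^k_\ell$ with $M=\log_2\ln n$, builds $\bar n_0$ from the dyadic increments via a least-squares lemma (Lemma~\ref{normal-vector-lem}: a unit vector with prescribed positive inner products against $m\le d/2$ independent Gaussian vectors, with failure probability $e^{-cd}$), and then repeatedly adds small perturbations $\bar\Delta_{k,\ell}$ supported on \emph{fresh} coordinate blocks, so that each correction is independent of the increments it has not yet looked at; the accumulated error is tracked through the block statistics and Lemma~\ref{norm of noncentered gaussian}, and only the very last step uses a Brownian-bridge maximal inequality. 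Your sketch gestures at a recursion of this flavour, but a Wendel-type count gives no constructive witness, and the coordinate-splitting device that makes the paper's conditioning work has no analogue on the sphere, where the coordinates of $\beta$ are not independent. To repair the proposal you would either have to develop a genuinely new conditioning scheme for spherical BM or undo the skew product and work with the Euclidean increments directly --- at which point the reduction has bought nothing.
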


\begin{rem}
The bound $\frac{1}{n}$ in the above theorem can be replaced with $\frac{1}{n^L}$
for any constant $L>0$ at expense of decreasing $c$ and increasing $n_0$.
\end{rem}

As an immediate corollary of Theorem~\ref{main result}, we get 

\begin{cor}
There exist universal constants $\tilde c>0$ and $n_0\in\N$ with the following property:
Let $n\ge n_0$ and let $\B_n(t)$ ($t\in[0,\infty)$) be the standard Brownian motion in $\R^n$.
Further, let $t_1,t_2,\dots,t_N$ be points generated by the homogeneous Poisson process on $[0,1]$
of intensity $\alpha>0$, which is independent from the process $\B_n$.
If $\alpha\le \exp(\tilde cn)$ then
$$\P\bigl\{0\in\conv\{\B(t_i):\,i\le N\}\bigr\}\le \frac{1}{n}.$$
\end{cor}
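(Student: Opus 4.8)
The plan is to derive the corollary from Theorem~\ref{main result}, used together with the Remark that follows it (with exponent $L=2$), via Brownian time-inversion. Recall the classical fact that if $\B_n$ is a standard Brownian motion in $\R^n$, then the process $W$ defined by $W(0):=0$ and $W(t):=t\,\B_n(1/t)$ for $t>0$ is again a standard Brownian motion in $\R^n$; in particular $\B_n(s)=s\,W(1/s)$ for every $s>0$. Since multiplying each vector of a finite family by a positive scalar does not affect whether the origin lies in its convex hull, we obtain, almost surely on the event $\{N\ge1\}$ (note that $t_i>0$ a.s. and that $N=0$ forces an empty convex hull),
$$0\in\conv\{\B_n(t_i):\,i\le N\}\iff 0\in\conv\{W(1/t_i):\,i\le N\}.$$

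Next I would control how close the points $t_i$ can get to $0$. Because every $t_i\in[0,1]$ we automatically have $1/t_i\ge1$, so the only bad scenario is that $\min_{i\le N}t_i$ is tiny. Conditionally on $N=k$ the points $t_1,\dots,t_k$ are i.i.d.\ uniform on $[0,1]$, hence for $\varepsilon\in[0,1]$ one has $\P\{\min_{i\le k}t_i<\varepsilon\}=1-(1-\varepsilon)^k\le k\varepsilon$; averaging over $N$ and using $\Exp N=\alpha$ gives
$$\P\{N\ge1,\ \min_{i\le N}t_i<\varepsilon\}\le\varepsilon\,\Exp N=\varepsilon\,\alpha.$$
Let $c>0$ be the constant produced by Theorem~\ref{main result} with $L=2$, set $\varepsilon:=2^{-cn}$, and fix any $\tilde c\in(0,c\log 2)$. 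Then for every $\alpha\le\exp(\tilde cn)$ this probability is at most $2^{-cn}\exp(\tilde cn)=\exp\bigl(-(c\log2-\tilde c)\,n\bigr)=\exp(-\gamma n)$, with $\gamma:=c\log2-\tilde c>0$.

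Finally I would assemble the two ingredients. On the event $\{N\ge1\}\cap\{\min_{i\le N}t_i\ge 2^{-cn}\}$ every time $1/t_i$ lies in $[1,2^{cn}]$, so $\{W(1/t_i):\,i\le N\}\subseteq\{W(t):\,t\in[1,2^{cn}]\}$ and therefore $\conv\{W(1/t_i):\,i\le N\}\subseteq\conv\{W(t):\,t\in[1,2^{cn}]\}$. Combining this with the equivalence above and applying Theorem~\ref{main result} to the Brownian motion $W$ (with $L=2$),
\begin{align*}
\P\bigl\{0\in\conv\{\B_n(t_i):\,i\le N\}\bigr\}
&\le\P\bigl\{0\in\conv\{W(t):\,t\in[1,2^{cn}]\}\bigr\}+\P\{N\ge1,\ \min_{i\le N}t_i<2^{-cn}\}\\
&\le\frac{1}{n^{2}}+\exp(-\gamma n)\le\frac1n
\end{align*}
for all $n$ at least some $n_0$; increasing $n_0$ to absorb the finitely many remaining values completes the argument.

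I do not expect a genuine obstacle here: the proof is essentially a change of the time variable together with a union bound. The one point that deserves care is that the time-inverted process $W$ is \emph{not} independent of $\B_n$, but this is immaterial, since Theorem~\ref{main result} is a statement about \emph{any} Brownian motion and $W$ is one. One should also record the harmless almost-sure facts used ($t_i>0$, and $N=0$ giving an empty hull) and the elementary observation that a nontrivial conical relation $\sum_i\lambda_i v_i=0$ with $\lambda_i\ge0$ is preserved when each $v_i$ is replaced by $s_iv_i$ with $s_i>0$.
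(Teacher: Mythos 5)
Your proof is correct, and since the paper states this result as an ``immediate corollary'' of Theorem~\ref{main result} without writing out an argument, your derivation --- time inversion $W(t)=t\,\B_n(1/t)$ (or, equivalently, Brownian scaling), the observation that positive rescaling of the points does not affect whether $0$ lies in the convex hull, the bound $\P\{N\ge 1,\ \min_i t_i<2^{-cn}\}\le \alpha 2^{-cn}$, and the Remark with $L=2$ to absorb the exponentially small error into $1/n$ --- is precisely the intended route. No gaps; the only care points (independence not actually needed, the a.s.\ facts about the Poisson points, and the conical-combination argument) are correctly noted.
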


In particular, we improve the left-hand side estimate in \eqref{eq-eldan} to $e^{c_1n}\le \alpha_0$ and,
together with the aforementioned result of \cite{kostya-pierre}, provide 
the optimal bounds for $\alpha_0$, up to the choice of $c_1$ and $c_2$.

\vskip 0.2cm

The main result of this paper is equivalent to the estimate 
$$
\P\bigl\{ \min_{u\in\S^{n-1}}\max_{t\in[1,2^{cn}]} \langle u, \B_n(t)\rangle <0\bigr\} \geq 1-\frac{1}{n}.
$$
We note that the minimax of certain Gaussian processes was studied in
\cite{MR800188}, \cite{MR950977} (see also \cite[Theorem~3.16]{MR1102015}). Those 
results found applications in Asymptotic Geometric Analysis (Dvoretzky's Theorem) and the theory of compressed sensing (see \cite{candes}). 

\vskip 0.2cm 

We think that it may be of interest to consider the following generalization of the question studied in this paper:
\vskip 0.2cm 
{\it
Let $X(t)$ be a centered Gaussian process in $\R^n$. Estimate the distribution of 
$$\min_{u\in\S^{n-1}}\max_{t} \langle u, X(t)\rangle$$ in terms of the covariance structure of the process $X$.} 
The corresponding question of estimating (up to a constant multiple) $\Exp \sup_t Y(t)$ for a $1$-dimensional 
Gaussian process $Y$ was solved by Fernique and Talagrand (see \cite{Talagrand} and references therein). 

\vskip 0.2cm

Let us give an informal description of the proof of the main result. We construct a random unit vector 
$\bar{n}$ in $\R^n$ such that with probability close to one 
\begin{equation}\label{eq-intro-goal}
\langle \bar{n}, \B_n(t)\rangle >0 \quad \text{for any }  t\in [1,2^{cn}].
\end{equation}
The construction procedure shall be divided into a series of steps. At the initial step, we produce 
a random vector $\bar{n}_0$ such that
$$
\langle \bar{n}_0, \B_n(2^i)\rangle >0 \quad \text{for any } i=0,1,\dots, cn.
$$
(In fact, $\bar{n}_0$ will satisfy a stronger condition). At a step $k$, $k\ge 1$, 
we ``update'' the vector $\bar{n}_{k-1}$ by adding a small ``perturbation'' in such a way that 
$$
\langle \bar{n}_{k}, \B_n(2^{j2^{-k}})\rangle >0  \quad \text{for any } j=0,1,\dots, 2^kcn.
$$
(Again $\bar{n}_k$ will in fact satisfy a stronger condition). Finally, using some standard properties of the Brownian 
bridge, we verify that 
$\bar{n}:= \bar{n}_{\ln\ln n}$ satisfies \eqref{eq-intro-goal} with a large probability.

\section{Preliminaries}

In this section we introduce some notation and state several auxiliary results that will be used within the proof.

By $\{e_i\}_{i=1}^n$ we denote the standard unit basis in $\R^n$,
by $\|\cdot\|$ ~--- the canonical Euclidean norm and by $\langle \cdot,\cdot\rangle$ ~---
the corresponding inner product.
For $N\ge n$ and an $N\times n$ matrix $A$, let $s_{\max}(A)$ and $s_{\min}(A)$ be its largest and smallest singular values, respectively, i.e.\
$s_{\max}(A)=\|A\|$ (the operator norm of $A$) and $s_{\min}(A)=\inf\limits_{y\in \S^{n-1}}\|Ay\|$.
For a finite set $I$, let $|I|$ be its cardinality.
By $c,c_1,\tilde c$, etc. we denote universal constants.
To avoid difficult to read formulas, we do not use any notation for truncation of a real number to the nearest integer.
For example, the product $cn$ in the next section is always treated as an integer,
as well as several other quantities depending on $n$.

Let $(\Omega,\Sigma,\P)$ be the probability space.
Throughout the text, $\gamma$ denotes the standard Gaussian variable.
The following estimate is well known (see, for example, \cite[Lemma~VII.1.2]{Feller}):
\begin{equation}\label{feller ineq}
\P\{\gamma\ge\tau\}=\frac{1}{\sqrt{2\pi}}\int\limits_\tau^\infty\exp(-t^2/2)\,dt<\frac{1}{\sqrt{2\pi}\tau}\exp(-\tau^2/2),\;\;\tau>0.
\end{equation}
Let $n\ge m$ and let $G$ be the standard $n\times m$ Gaussian matrix. Then for any $t\ge 0$
\begin{equation}\label{gaussian matrix ineq}
\P\bigl\{\sqrt{n}-\sqrt{m}-t\le s_{\min}(G)\le s_{\max}(G)\le \sqrt{n}+\sqrt{m}+t\bigr\}\ge 1-2\exp(-t^2/2)
\end{equation}
(see, for example, \cite[Corollary~5.35]{V}).

The proof of the next Lemma is straightforward, so we omit it.
\begin{lemma}\label{brownian-segment-split}
Let $\B_n(t)$ ($0\le t>\infty$) be the standard Brownian motion in $\R^n$ and let $0<a<b$.
Fix any $s\in(a,b)$ and set
$$
w(s):= \frac{b-s}{b-a}\B_n(a)+ \frac{s-a}{b-a}\B_n(b);\;\;u(s):= \B_n(s)-w(s).
$$ 
Then
\begin{enumerate}
\item $u(s)\sim \mathcal{N}\bigl(0,\frac{(b-s)(s-a)}{b-a}\Id_n\bigr)$.
\item The random vector $u(s)$ is independent from $(\B_n(t))$, $t\in(0,a]\cup[b,\infty)$.
\end{enumerate}
\end{lemma}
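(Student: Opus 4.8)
The plan is to reduce the statement to one dimension and then invoke the principle that an uncorrelated jointly Gaussian family is independent. Since the $n$ coordinate processes of $\B_n$ are independent standard one-dimensional Brownian motions, and the map $(\B_n(a),\B_n(s),\B_n(b))\mapsto u(s)$ acts separately on each coordinate, it suffices to treat the scalar case and then take products over coordinates: the $j$-th coordinate of $u(s)$ will have the claimed one-dimensional law and will be independent both of the remaining coordinates of $u(s)$ and of the $j$-th coordinate of $(\B_n(t))$ restricted to $(0,a]\cup[b,\infty)$, which yields the $\R^n$ assertions.

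So let $B$ be a standard one-dimensional Brownian motion, and set $\alpha=\frac{b-s}{b-a}$ and $\beta=\frac{s-a}{b-a}$, so that $\alpha+\beta=1$ and $\alpha a+\beta b=s$. For part (1) I would observe that $u(s)=B(s)-\alpha B(a)-\beta B(b)$ is a fixed linear combination of centered jointly Gaussian variables, hence itself centered Gaussian; then I would compute $\Exp[u(s)^2]$ by expanding the square and using $\Exp[B(p)B(q)]=\min(p,q)$, simplifying with the relations $\alpha+\beta=1$ and $\alpha a+\beta b=s$ to obtain $\Exp[u(s)^2]=\frac{(b-s)(s-a)}{b-a}$. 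For part (2) I would fix $t\in(0,a]\cup[b,\infty)$ and compute $\Exp[u(s)B(t)]$ the same way: if $t\le a$ this equals $t-\alpha t-\beta t=0$, and if $t\ge b$ it equals $s-\alpha a-\beta b=0$; so $u(s)$ is uncorrelated with every such $B(t)$.

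To finish part (2) I would note that for any finite set $t_1,\dots,t_k\in(0,a]\cup[b,\infty)$ the vector $(u(s),B(t_1),\dots,B(t_k))$ is jointly Gaussian (a linear image of a Gaussian vector) with $u(s)$ having zero covariance with each $B(t_i)$, hence $u(s)$ is independent of $(B(t_1),\dots,B(t_k))$; since such finite-dimensional cylinder events form a $\pi$-system generating $\sigma(B(t):t\in(0,a]\cup[b,\infty))$, a standard $\pi$-$\lambda$ argument promotes this to independence of $u(s)$ from that whole $\sigma$-algebra. The one step deserving care — and, consistent with the paper calling the proof straightforward, it is the only one — is exactly this last passage from uncorrelatedness with every finite marginal of the trajectory to genuine independence from the entire path on $(0,a]\cup[b,\infty)$; this is where joint Gaussianity, rather than mere knowledge of second moments, is essential.
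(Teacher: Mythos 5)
Your proof is correct, and it is precisely the standard argument the authors have in mind when they call the proof straightforward and omit it: reduce to one coordinate, verify the variance and the vanishing covariances with $B(t)$ for $t\in(0,a]\cup[b,\infty)$ using $\Exp[B(p)B(q)]=\min(p,q)$, and upgrade uncorrelatedness to independence via joint Gaussianity of finite-dimensional marginals plus a $\pi$-$\lambda$ argument. Since the paper gives no proof of its own, there is nothing further to compare; your write-up, including the careful handling of the passage from finite marginals to the full $\sigma$-algebra, fills the gap correctly.
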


\begin{lemma}\label{normal-vector-lem}
Let $d,m \in\mathbb{N}$ be such that $m\leq d/2$. Let $X_1, X_2,\ldots, X_m$ be independent standard Gaussian 
vectors in $\mathbb{R}^d$. Then for any $b\in\S^{m-1}$, there exists a random unit vector $\bar{u}\in\R^d$ 
such that 
$$
\P \Big\{ \langle \bar{u},X_i\rangle \ge c_{\ref{normal-vector-lem}}\sqrt{d}
\vert b_i\vert,\ \text{ for all } i=1,2,\ldots ,m\Big\}\ge 1-\exp(-c_{\ref{normal-vector-lem}}d),
$$
where $c_{\ref{normal-vector-lem}}$ is a universal constant and $b_i$'s are the coordinates of $b$.
Moreover, $\bar u$ can be defined as a Borel function of $X_i$'s and $b$.
\end{lemma}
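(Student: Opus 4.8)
The plan is to build $\bar u$ inside the column span of the $d\times m$ matrix $G:=[X_1\,|\,\cdots\,|\,X_m]$, choosing it so that $G^{T}\bar u$ is a positive scalar multiple of $|b|:=(|b_1|,\dots,|b_m|)$; then $\langle X_i,\bar u\rangle$ will be proportional to $|b_i|$ and I will only need to control the proportionality constant. Concretely, on the almost sure event $\Event$ that ${\rm rank}(G)=m$ I will set $v:=G(G^{T}G)^{-1}|b|$ and $\bar u:=v/\|v\|$ (here $v\neq0$ because $\ker G=\{0\}$ and $|b|\neq0$ on $\Event$), and on the complement of $\Event$ I will set $\bar u:=e_1$; since forming $(G^{T}G)^{-1}$ and normalizing are Borel operations, this makes $\bar u$ a Borel function of $(X_1,\dots,X_m,b)$.

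With this choice $G^{T}v=G^{T}G(G^{T}G)^{-1}|b|=|b|$, so $\langle X_i,\bar u\rangle=(G^{T}\bar u)_i=|b_i|/\|v\|\ge0$ for every $i$ on $\Event$, and it remains to show $\|v\|\le 1/(c_{\ref{normal-vector-lem}}\sqrt d)$ with high probability. For this I will use the identity $\|v\|^{2}=|b|^{T}(G^{T}G)^{-1}|b|\le s_{\min}(G)^{-2}\,\||b|\|^{2}=s_{\min}(G)^{-2}$, which reduces everything to a lower bound on $s_{\min}(G)$. I then apply \eqref{gaussian matrix ineq} with $d$ in place of $n$, the given $m$, and $t:=\tfrac12(\sqrt d-\sqrt m)$: since $m\le d/2$ we have $\sqrt d-\sqrt m\ge(1-1/\sqrt2)\sqrt d\ge\sqrt d/4$, so with probability at least $1-2\exp\bigl(-\tfrac18(\sqrt d-\sqrt m)^{2}\bigr)$ one has $s_{\min}(G)\ge\tfrac12(\sqrt d-\sqrt m)\ge\tfrac18\sqrt d$, and on this event $\langle X_i,\bar u\rangle=|b_i|/\|v\|\ge s_{\min}(G)\,|b_i|\ge\tfrac18\sqrt d\,|b_i|$ for all $i$.

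This is essentially a computation, so I do not expect a serious obstacle; the only things needing a little care are the two bookkeeping points already visible above. First, one must make sure $\bar u$ is genuinely Borel, which is the reason for fixing an arbitrary value on the null set where $G$ drops rank. Second, to present the probability in the form $1-\exp(-c_{\ref{normal-vector-lem}}d)$ one absorbs the harmless factor $2$ using $2\exp(-d/128)\le\exp(-d/256)$ for $d$ above an absolute constant, and for the finitely many smaller values of $d$ one simply shrinks $c_{\ref{normal-vector-lem}}$, using that $\P\{s_{\min}(G)<c\sqrt d\}\to0$ as $c\to0^{+}$ while ${\rm rank}(G)=m$ almost surely. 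If forced to name a ``hard part'', it is choosing $t$ proportional to $\sqrt d-\sqrt m$ so that the hypothesis $m\le d/2$ converts into the clean lower bound $s_{\min}(G)\gtrsim\sqrt d$ — which, as just shown, is not really hard.
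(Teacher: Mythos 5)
Your proposal is correct and is essentially the paper's own argument: your $\bar u = G(G^{T}G)^{-1}|b|\,/\,\|G(G^{T}G)^{-1}|b|\|$ is precisely the unit vector in $\spn\{X_1,\dots,X_m\}$ whose inner products with the $X_i$ are proportional to $|b_i|$, which the paper describes geometrically via the distance from the origin to the affine hull of $\{|b_i|^{-1}X_i\}$, and in both cases the proportionality factor is bounded below by the smallest singular value of the Gaussian matrix, after which \eqref{gaussian matrix ineq} together with $m\le d/2$ finishes the proof. The only cosmetic differences are your explicit pseudo-inverse formula (which makes the Borel measurability transparent) and the bookkeeping that absorbs the factor $2$ and the small-$d$ cases into the choice of $c_{\ref{normal-vector-lem}}$.
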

\begin{proof}
Without loss of generality, we can assume that $b_i\neq 0$ for any $i\le m$
and that $X_i$'s are linearly independent on the entire probability space. 
Denote by $E$ the affine subspace spanned by $\{\vert b_i\vert^{-1}X_i\}_{i\le m}$.
Define $\bar{u}$ as the unique unit vector in $\spn\{X_1,\dots,X_m\}$ such that $\bar{u}$ is orthogonal to 
$E$ and for any $i\le m$ we have
$$\langle \bar{u}, \vert b_i\vert^{-1}X_i \rangle = \d(0, E),$$
where $\d(0, E)$ stands for the distance from the origin to $E$. Then we have
\begin{equation}\label{eq-dist-E}
\sum_{i\le m} \langle \bar{u}, X_i\rangle^2= \sum_{i\le m} \bigl\langle \bar{u}, \frac{X_i}{\vert b_i\vert}\bigr\rangle^2 \vert b_i\vert^2 
= \sum_{i\le m} \d(0, E)^2\cdot \vert b_i\vert^2= \d(0, E)^2.
\end{equation}
Let $G$ be the $m\times d$ standard Gaussian matrix with rows $X_i$, $i=1,2,\ldots, m$. 
Using the definition of $\bar{u}$ together with \eqref{eq-dist-E}, we obtain for any $\tau>0$:
\begin{align*}
\P \Big\{ \langle \bar{u},X_i\rangle \ge \tau\sqrt{d} \vert b_i\vert \text{ for all } i=1,2,\dots ,m\Big\}
&= \P \Big\{ d(0,E)\ge \tau\sqrt{d}\Big\}\\
&= \P \Big\{ \sqrt{\sum_{i\le m} \langle \bar{u}, X_i\rangle^2}\ge \tau\sqrt{d}\Big\}\\
&=\P\Big\{\Vert G\bar{u}\Vert\ge \tau\sqrt{d}\Big\}\\
&\ge \P\Big\{ s_{\min}(G)\ge \tau\sqrt{d}\Big\}.
\end{align*}
The proof is finished by choosing a sufficiently small $c_{\ref{normal-vector-lem}}:=\tau$
and applying \eqref{gaussian matrix ineq}.
\end{proof}

\begin{lemma}\label{norm of noncentered gaussian}
Let $q\in\N$ and $r\in\R$ with $e\le r\le \sqrt{\ln q}$, and let
$\gamma_1, \gamma_2,\ldots, \gamma_q$ be independent standard Gaussian variables. 
Define a random vector $b=(b_1,b_2,\dots,b_q)\in\R^q$
by $b_i:=\max(0, \gamma_i - r)$, $i\le q$. Then 
$$
\P\Big\{ \Vert b\Vert \le 4\sqrt{q}\exp(-r^2/8)\Big\} \ge 1-\exp(-2\sqrt{q}).
$$ 
\end{lemma}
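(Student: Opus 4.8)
The plan is to control $\Vert b\Vert^2=\sum_{i=1}^q b_i^2$ by an exponential Markov (Chernoff) bound applied to the sum, exploiting independence of the $\gamma_i$. The key structural fact is that $b_i^2\le (\gamma_i-r)^2$, which is a sub-exponential variable: $\Exp\exp(\lambda b_i^2)<\infty$ for every $\lambda<1/2$. I will work throughout with the fixed value $\lambda=1/4$, which is small enough to keep the relevant Gaussian integral convergent and give clean constants.

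First I would estimate the moment generating function of a single term. Splitting the expectation over the event $\{\gamma_1\le r\}$, on which $b_1=0$, and substituting $t=r+s$ in the remaining integral, one computes
\begin{align*}
\Exp\exp\Big(\frac{b_1^2}{4}\Big)
&=\P\{\gamma_1\le r\}+\frac{1}{\sqrt{2\pi}}\int_r^\infty\exp\Big(\frac{(t-r)^2}{4}-\frac{t^2}{2}\Big)\,dt\\
&\le 1+\frac{e^{-r^2/2}}{\sqrt{2\pi}}\int_0^\infty e^{-rs}\,ds
=1+\frac{e^{-r^2/2}}{r\sqrt{2\pi}},
\end{align*}
where we used $\frac{(t-r)^2}{4}-\frac{t^2}{2}=-\frac{r^2}{2}-rs-\frac{s^2}{4}\le -\frac{r^2}{2}-rs$ for $s\ge 0$. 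Since $1+x\le e^x$ and the $\gamma_i$ are independent, this gives
$$
\Exp\exp\Big(\frac14\sum_{i=1}^q b_i^2\Big)\le\exp\Big(\frac{q\,e^{-r^2/2}}{r\sqrt{2\pi}}\Big).
$$

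Next, Markov's inequality applied to $\exp\big(\frac14\sum_i b_i^2\big)$ yields, for every $t>0$,
$$
\P\Big\{\sum_{i=1}^q b_i^2>t\Big\}\le\exp\Big(-\frac{t}{4}+\frac{q\,e^{-r^2/2}}{r\sqrt{2\pi}}\Big).
$$
I would then take $t:=16q\,e^{-r^2/4}$, so that the complementary event is exactly $\{\Vert b\Vert\le 4\sqrt{q}\,e^{-r^2/8}\}$. With this choice the first term of the exponent equals $-4q\,e^{-r^2/4}$, while, using $r\ge e$ (so $r\sqrt{2\pi}\ge 1$), the correction term is at most $q\,e^{-r^2/2}\le q\,e^{-r^2/4}$; hence the exponent is bounded above by $-3q\,e^{-r^2/4}$. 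Finally, since $r\le\sqrt{\ln q}$ we have $e^{-r^2/4}\ge q^{-1/4}$, so that $3q\,e^{-r^2/4}\ge 3q^{3/4}\ge 2\sqrt{q}$, which gives the claimed bound $\P\{\Vert b\Vert>4\sqrt{q}\,e^{-r^2/8}\}\le\exp(-2\sqrt{q})$.

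There is no serious obstacle here: the argument is a routine sub-exponential concentration estimate. The only points that require care are (i) choosing the exponential parameter strictly below $1/2$ so that $\Exp\exp(\lambda b_1^2)$ is finite and produces a usable bound (the value $\lambda=1/4$ works), and (ii) verifying the two elementary numerical inequalities, where the hypothesis $r\ge e$ is used to absorb the $e^{-r^2/2}$ correction into $e^{-r^2/4}$, and $r\le\sqrt{\ln q}$ is used to ensure that $q\,e^{-r^2/4}$ dominates $\sqrt{q}$.
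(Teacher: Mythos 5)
Your proof is correct and follows essentially the same route as the paper: an exponential moment (Chernoff) bound on $\Vert b\Vert^2$ followed by Markov's inequality at the threshold $16q e^{-r^2/4}$, with the hypothesis $r\le\sqrt{\ln q}$ converting the resulting exponent into the required $2\sqrt{q}$. The only deviation is in the single-coordinate MGF estimate: you evaluate the Gaussian integral directly with the fixed parameter $\lambda=1/4$, whereas the paper integrates tail probabilities via \eqref{feller ineq} and is therefore forced into the $r$-dependent choice $\lambda=\bigl(2+\frac{r^2}{\ln r}\bigr)^{-1}$; your version is, if anything, slightly cleaner and yields a stronger intermediate bound, and all the numerical checks (the role of $r\ge e$ and $r\le\sqrt{\ln q}$) are in order.
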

\begin{proof}
Let $\lambda \in (0,1/2)$. We have 
$$
\E e^{\lambda \Vert b\Vert^2} = \prod_{i=1}^q \E e^{\lambda {b_i}^2}
= \left( 1+\int_{1}^\infty \P\{ e^{\lambda {b_1}^2}\ge \tau\} d\tau\right)^q.
$$
Next, using \eqref{feller ineq}, we get
\begin{align*}
\int_{1}^\infty \P\{ e^{\lambda b_1^2}\ge \tau\} d\tau 
&\le (r-1)\P\{\gamma_1>r\} + \int_{r}^{\infty} \P\{e^{\lambda b_1^2}\ge \tau\} d\tau\\
&\le e^{-r^2/2} +  \int_{r}^{\infty} \P\Bigl\{\gamma_1\ge \sqrt{\frac{\ln\tau}{\lambda}}\Bigr\} d\tau\\
&\le e^{-r^2/2}+ \int_{r}^{\infty} \tau^{-\frac{1}{2\lambda}}d\tau\\
&=e^{-r^2/2}+ \frac{r^{1-\frac{1}{2\lambda}}}{\frac{1}{2\lambda}-1}.
\end{align*}
Now, take $\lambda=\bigl(2+\frac{r^2}{\ln r}\bigr)^{-1}$ so that $\frac{1}{2\lambda} -1= \frac{r^2}{2\ln r}$. 
After replacing $\lambda$ with its value, we deduce that 
\begin{equation}\label{eq-laplace}
\E e^{\lambda \Vert b\Vert^2} \le \bigl( 1+2e^{-r^2/2}\bigr)^q\le \exp(2qe^{-r^2/2}).
\end{equation}
Using Markov's inequality together with \eqref{eq-laplace}, we obtain
$$
\P\{\lambda \Vert b\Vert^2\ge 4q e^{-r^2/2}\} \le \exp(-2qe^{-r^2/2})\le \exp(-2\sqrt{q}),
$$
where the last inequality holds since $r\le \sqrt{\ln q}$. To finish the proof, it remains to note that
$$
\frac{4q e^{-r^2/2}}{\lambda} \le 8qr^2e^{-r^2/2}\le 16q e^{-r^2/4}.
$$

\end{proof}



\section{The proof}\label{the proof section}

Throughout the section, we assume that $c>0$ and $n_0\in\N$ are appropriately chosen constants (with $c$ sufficiently
small and $n_0$ sufficiently large) and $n\ge n_0$ is fixed. The precise conditions on $c$ and $n_0$ can be recovered
from the proof, however, we prefer to avoid these technical details.
To prove our main result, we shall construct a random unit vector $\bar n\in\R^n$ such that
\begin{equation}\label{condition on barn}
\langle \bar n,\B_n(t)\rangle > 0\;\quad \text{for any } t\in[1,2^{cn}]
\end{equation}
with probability close to one. 

 Our construction shall be iterative; in fact, we shall produce a sequence
of random vectors $\bar n_k$, $k=0,1,\dots,M$ (with $M=\log_2 \ln n$), where each $\bar n_k$
satisfies $\langle \bar n_k,\B_n(t)\rangle >0$ for certain discrete subset of $[1,2^{cn}]$ with a high probability
(the precise condition shall be given later). The size of those discrete subsets shall grow with $k$ in such a way
that the vector $\bar n:=\bar n_{M}$ shall possess the required property \eqref{condition on barn} with probability
close to one.

Given any $0<k\le M$, the vector $\bar n_k$ shall be a ``small perturbation'' of the vector $\bar n_{k-1}$.
The operation of constructing $\bar n_k$ will be referred to as {\it the $k$-th step} of the construction.
We must admit that the construction is rather technical. In fact, each step itself shall be divided
into a sequence of {\it substeps}. To make the exposition of the proof as clear as possible, we
won't provide all the details at once but instead introduce them sequentially.

Let $M'=\frac{1}{4}\log_2 \ln n$. We split $\R^n$ into $(M+1)\times M'$ 
coordinate subspaces. Precisely, we write 
$$
\R^n:=\prod_{k=0}^{M}\prod_{\ell=1}^{M'}\R^{J_\ell^k},
$$
where $J_\ell^k$ are pairwise disjoint subsets of $\{1,2,\dots,n\}$ with
$|J_\ell^k|=c_Jn2^{-(k+\ell)/8}$ for an appropriate constant $c_J$ and
$\R^{J_\ell^k}= \spn\{e_i\}_{i\in J_\ell^k}$.  
For every $k\le M,\ell\le M'$, define $\Proj_\ell^k:\R^n\to \R^n$
as the orthogonal projection onto $\R^{J_\ell^k}$.

Let $N=cn$ and define
$$a_0=0 \text{ \ and \ }a_i:=2^{i-1},\;\;i=1,2,\dots,N+1.$$
We shall split the interval $[0,a_{N+1}]$ into ``blocks''.
{\it The zero} block is the interval $[0,1]$; for each admissible $i\ge 0$,
the $i$-th block is the interval $[a_i,a_{i+1}]$.
With the $i$-th block, we associate a sequence of sets
$I^i_k$, $k=0,1,\dots,M,$ in the following way:
for $i=0$ we have $I^i_k=\emptyset$ for all $k\ge 0$;
for $i\ge 1$, we set $I^i_0=\emptyset$ and
$$I^i_k:=\{2^{1/2^{k}}a_i,2^{2/2^k}a_i,\dots,2^{(2^k-1)/2^k}a_i\},
\;\;k=1,2,\dots,M.$$

Further, we define two functions $f,h:\N_0\times\N_0\to\R_+$
as follows:
\begin{enumerate}
\item $f$ is decreasing in both arguments;
$f(0,0)=C_f+(1-2^{-1/4})^{-2} C_f$; for each $k\ge 0$ and
$\ell>0$ we have $f(k,\ell-1)-f(k,\ell)=C_f2^{-(k+\ell)/4}$;
finally, $f(k,0)=\lim\limits_{\ell\to\infty}f(k-1,\ell)$ for all $k\ge 1$.

\item $h$ is increasing in both arguments;
$h(0,0)=0$; for each $k\ge 0$ and
$\ell>0$ we have $h(k,\ell)-h(k,\ell-1)=C_h2^{-(k+\ell)/4}$;
moreover, $h(k,0)=\lim\limits_{\ell\to\infty}h(k-1,\ell)$ for all $k\ge 1$.
\end{enumerate}

Here, $C_f=2(1-2^{-1/4})^{-2}C_h$.
Note that the definition implies $f(k,\ell)\ge C_f\ge 2h(k,\ell)$ for all admissible $k,\ell$.
The constants $c$ and $C_f$ are connected via the relation
\begin{equation}\label{relation between constants}
8c f(1,0)^2=c_J {c_{\ref{normal-vector-lem}}}^2,
\end{equation}
where $c_{\ref{normal-vector-lem}}$ is taken from Lemma~\ref{normal-vector-lem}.
Thus, the choice of $c$ will determine both $C_f$ and $C_h$. In what follows, we always assume that $c>0$ is chosen to
be very small, so that both $C_f$ and $C_h$ are very large.

Now, we can state more precisely what we mean by the $k$-th step of the construction ($k=0,1,\dots,M$). {\bf The goal
of the $k$-th step is to produce a random unit vector $\bar n_k$ with the following properties:}
\begin{align}
&\begin{aligned}{\bf 1.}\;\bar n_k\in \prod_{p=0}^{k}\prod_{\ell=1}^{M'}\R^{J^p_\ell};\end{aligned}\label{nk prop 1}\\
&\begin{aligned}&{\bf 2.}\;\bar n_k\mbox{ is measurable with respect to the $\sigma$-algebra generated by $\Proj^p_\ell \B_n(t)$,}\\
&\mbox{$0\le p\le k$, $1\le \ell\le M'$,\;
$t\in\{a_1,\dots,a_{N+1}\}\cup I^1_k\cup I^2_k\cup \dots\cup I^N_k$};
\end{aligned}\label{nk prop 2}\\
&{\bf 3.}\;\mbox{The event}\nonumber\\
&\hspace{1cm}\begin{aligned}
\Event_k=\Bigl\{&\langle \bar n_{k},\B_n(t)-\B_n(a_i)\rangle\ge -h(k+1,0)\sqrt{a_i}\mbox{ and}\Bigr.\\
\Bigl.&\langle \bar n_{k},\B_n(a_{i+1})-\B_n(a_{i})\rangle\ge f(k+1,0)\sqrt{a_{i+1}}\Bigr.\\
\Bigl.&\mbox{for all }t\in I^i_k\mbox{ and }i=0,1,\dots,N\Bigr\}
\end{aligned}\nonumber\\
&\mbox{has probability close to one}\nonumber.
\end{align}
Quantitative estimates of $\P(\Event_k)$ shall be given later.
Note that the third property, together with the definition of the functions $f$ and $h$, implies that
$$\P\bigl\{\langle \B_n(t),\bar n_k\rangle>0\mbox{ for any }t\in\{a_1,\dots,a_{N+1}\}\cup I^1_k\cup I^2_k\cup \dots\cup I^N_k\bigr\}
\ge \P(\Event_k)\approx 1.$$
Moreover, as we show later, standard estimates for the maximum of
the Brownian bridge imply \eqref{condition on barn} for $\bar n=\bar n_M$
with probability at least $\P(\Event_M)-1/n^2\approx 1$.

The vector $\bar n_0$ shall be constructed directly using Lemma~\ref{normal-vector-lem}.
For $k\ge 1$, the vectors $\bar n_k$ are obtained via an embedded iteration procedure realized as
a sequence of substeps. First, let us give a ``partial'' description of the procedure, omitting some details.

Fix $k\ge 1$ and set $\bar n_{k,0}:=\bar n_{k-1}$. We shall inductively construct random vectors $\bar n_{k,\ell}$, $1\le\ell\le M'$
using the following notion.
For each $\ell=1,2\dots,M'+1$ and every
block $i=0,1,2,\dots,N$ {\it the $i$-th block statistic} is
\begin{align*}
\BStat_i(k,\ell):=\max\Bigl(0,
&\max\limits_{t\in I^i_k}\bigl\langle \bar n_{k,\ell-1},
\frac{\B_n(a_i)-\B_n(t)}{\sqrt{a_i}}\bigr\rangle-h(k,\ell),\Bigr.\\
\Bigl.&\bigl\langle \bar n_{k,\ell-1},\frac{\B_n(a_i)-\B_n(a_{i+1})}{\sqrt{a_{i+1}}}\bigr\rangle+f(k,\ell)\Bigr).
\end{align*}
Note that for the zero block the corresponding statistic is simply
$$\max\Bigl(0,-\bigl\langle \bar n_{k,\ell-1},
\B_n(a_{1})\bigr\rangle+f(k,\ell)\Bigr).$$
The $(N+1)$-dimensional vector $\bigl(\BStat_0(k,\ell),\dots,\BStat_N(k,\ell)\bigr)$
shall be denoted by $\BStat(k,\ell)$. Let us also denote
$$\BadBlocks(k,\ell):=\bigl\{i:\,\BStat_i(k,\ell)\neq 0\bigr\}.$$
{\bf Given $\bar n_{k,\ell-1}$, the goal of the $\ell$-th substep is to construct a random unit vector $\bar n_{k,\ell}$ such that}
\begin{align}
&\begin{aligned}{\bf 1.}\;\bar n_{k,\ell}\in \prod_{(p,q)\precsim (k,\ell)}\R^{J^p_q},
\mbox{ with $(p,q)\precsim (k,\ell)$ meaning ``$p<k$ or $p=k,\,q\le \ell$''};
\end{aligned}\label{nkl prop 1}\\
&\begin{aligned}&{\bf 2.}\;\mbox{$\bar n_{k,\ell}$ is measurable with respect to the $\sigma$-algebra generated by $\Proj^p_q \B_n(t)$,}\\
&\mbox{for all $(p,q)\precsim (k,\ell)$ and $t\in\{a_1,\dots,a_{N+1}\}\cup I^1_k\cup I^2_k\cup \dots\cup I^N_k$};
\end{aligned}\label{nkl prop 2}\\
&\begin{aligned}{\bf 3.}\;\mbox{$\BStat(k,\ell+1)$ ``typically'' has a smaller Euclidean norm than $\BStat(k,\ell)$.}
\end{aligned}\nonumber
\end{align}
The third property shall be made more precise later. For now, we note that
the ``typical'' value of $\BStat(k,\ell)$ shall decrease with $\ell$ 
in such a way that, after the $M'$-th substep, the vector $\BStat(k,M'+1)$ shall be zero with probability close to one.
Juxtaposing the definition of the block statistics with that of $\Event_k$, it is easy to see that, by setting $\bar n_k:=\bar n_{k,M'}$,
we get
$$\P(\Event_k)=\P\bigl\{\BadBlocks(k,M'+1)=\emptyset\bigr\}=\P\bigl\{\BStat(k,M'+1)={\bf 0}\bigr\}\approx 1.$$

The vector $\bar n_{k,\ell}$ shall be defined as
\begin{equation}\label{def of nkl via delta}
\bar n_{k,\ell}=\frac{\bar{n}_{k,\ell-1}+\alpha_{k,\ell}\bar\Delta_{k,\ell}}{\sqrt{1+{\alpha_{k,\ell}}^2}},
\end{equation}
where $\bar\Delta_{k,\ell}$ is a random unit vector (``perturbation'')
and $\alpha_{k,\ell}=16^{-k-\ell}$.\\
{\bf The vector $\bar\Delta_{k,\ell}$ shall satisfy the following properties:}
\begin{align}
&\begin{aligned}{\bf 1.}\;\bar\Delta_{k,\ell}\in\R^{J^k_\ell};\end{aligned}\label{deltakl prop 1}\\
&\begin{aligned}&{\bf 2.}\;\mbox{$\bar\Delta_{k,\ell}$ is measurable with respect to the $\sigma$-algebra generated by $\Proj^p_q \B_n(t)$}\\
&\mbox{for all admissible $(p,q)\precsim (k,\ell)$ and $t\in\{a_1,\dots,a_{N+1}\}\cup I^1_k\cup I^2_k\cup \dots\cup I^N_k$};
\end{aligned}\label{deltakl prop 2}\\
&\begin{aligned}&{\bf 3.}\;\mbox{For any fixed subset $I\subset\{0,1,\dots,N\}$ such that $\P\{\BadBlocks(k,\ell)=I\}>0$,}\\
&\mbox{$\bar\Delta_{k,\ell}$ is {\it conditionally} independent from the collection of random vectors}\\
&\hspace{1cm}\bigl\{\Proj^k_\ell(\B_n(t)-\B_n(a_i))\;\;(t\in I^i_k\cup\{a_{i+1}\}),\;\;i\notin I\bigr\}\\
&\mbox{given the event $\{\BadBlocks(k,\ell)=I\}$.}
\end{aligned}\label{deltakl prop 3}\\
&\begin{aligned}
&{\bf 4.}\;
\mbox{The event}\\
&\hspace{2cm}\Event_{k,\ell}:=\bigl\{\BStat_i(k,\ell+1)=0\mbox{ for all }i\in\BadBlocks(k,\ell)\bigr\}\\
&\mbox{has probability close to one.}
\end{aligned}\nonumber
\end{align}
Again, we shall make the last statement more precise later.
Before that, we need to verify certain quantitative properties of the block statistics.
The next Lemma deals with the statistics for the initial substep; it is followed by a corresponding statement
for $\BStat(k,\ell)$, $\ell>1$.

\begin{lemma}[Initial substep for block statistics]\label{BStat initial substep lemma}
Fix any $1\le k\le M$ and assume that a random unit vector $\bar n_{k,0}:=\bar n_{k-1}$ satisfying
properties \eqref{nk prop 1} and \eqref{nk prop 2} has been constructed. Then
\begin{align*}
\P&\Bigl\{|\BadBlocks(k,1)|\le N\exp(-{C_h}^2 2^{k/2}/16)\mbox{ and }
\|\BStat(k,1)\|\le\frac{8\sqrt{N}}{\exp({C_h}^2 2^{k/2}/32)}\Bigr\}\\
&\ge \P(\Event_{k-1})-2\exp(-2\sqrt{N}).
\end{align*}
\end{lemma}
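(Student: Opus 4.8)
The plan is to condition on the Brownian motion sampled on the ``$(k-1)$-st grid'' and then reduce the statement to Lemma~\ref{norm of noncentered gaussian}. Let $\mathcal F$ be the $\sigma$-algebra generated by $\B_n(s)$ for $s\in\{a_1,\dots,a_{N+1}\}\cup I^1_{k-1}\cup\dots\cup I^N_{k-1}$. One checks that $I^i_{k-1}\subset I^i_k$ and that, for $i\ge 1$, the $2^{k-1}$ points of $I^i_k\setminus I^i_{k-1}$ are the geometric midpoints of the consecutive subintervals cut out of $[a_i,a_{i+1}]$ by $\{a_i,a_{i+1}\}\cup I^i_{k-1}$, so distinct such ``new'' points lie in distinct subintervals. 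For a new point $t$ with grid-neighbours $a<t<b$ put $w(t)=\tfrac{b-t}{b-a}\B_n(a)+\tfrac{t-a}{b-a}\B_n(b)$ and $u(t)=\B_n(t)-w(t)$. By Lemma~\ref{brownian-segment-split} and the mutual independence of Brownian bridges over disjoint time-intervals, conditionally on $\mathcal F$ the vectors $u(t)$ are independent, with $u(t)\sim\mathcal N\bigl(0,\tfrac{(b-t)(t-a)}{b-a}\Id_n\bigr)$ and $\tfrac{(b-t)(t-a)}{b-a}\le\tfrac{b-a}{4}\le 2^{-k}a_i$. Since $\bar n_{k-1}$ and $\Event_{k-1}$ are $\mathcal F$-measurable (by \eqref{nk prop 2} and the definition of $\Event_{k-1}$), the normalized variables $g_{i,t}:=-\langle\bar n_{k-1},u(t)\rangle\big/\sqrt{(b-t)(t-a)/(b-a)}$ are, conditionally on $\mathcal F$, i.i.d.\ standard Gaussians.

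Next I would estimate $\BStat_i(k,1)$ on $\Event_{k-1}$. For a new point $t$, decompose $\B_n(a_i)-\B_n(t)=(\B_n(a_i)-w(t))-u(t)$; the vector $\B_n(a_i)-w(t)$ is a convex combination of $\B_n(a_i)-\B_n(a)$ and $\B_n(a_i)-\B_n(b)$, whose inner products with $\bar n_{k-1}$ are, on $\Event_{k-1}$, at most $h(k,0)\sqrt{a_i}$ (being $0$ when $a=a_i$ and negative when $b=a_{i+1}$). Hence the ``new-point'' contribution to $\BStat_i(k,1)$ is at most $\max_{t}\bigl(h(k,0)-h(k,1)-\langle\bar n_{k-1},u(t)/\sqrt{a_i}\rangle\bigr)$, the ``old-point'' contribution (over $t\in I^i_{k-1}$) is at most $h(k,0)-h(k,1)<0$, the $f$-term is at most $f(k,1)-f(k,0)<0$, and the zero block contributes $0$. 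Using $-\langle\bar n_{k-1},u(t)/\sqrt{a_i}\rangle\le 2^{-k/2}\max(0,g_{i,t})$, the relation $h(k,1)-h(k,0)=C_h2^{-(k+1)/4}$, and setting $r:=C_h2^{(k-1)/4}$ (so $2^{-k/2}r=C_h2^{-(k+1)/4}$), this yields, on $\Event_{k-1}$,
$$
\BStat_i(k,1)\ \le\ 2^{-k/2}\max\Bigl(0,\ \max_{t\in I^i_k\setminus I^i_{k-1}}g_{i,t}-r\Bigr).
$$
In particular $i\in\BadBlocks(k,1)$ forces $g_{i,t}>r$ for some new $t$, and $\|\BStat(k,1)\|^2\le 2^{-k}\sum_{i,t}\max(0,g_{i,t}-r)^2$.

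To finish, apply Lemma~\ref{norm of noncentered gaussian} conditionally on $\mathcal F$ to the vector $b=\bigl(\max(0,g_{i,t}-(r-1))\bigr)$ of dimension $q=N2^{k-1}$; its hypothesis $e\le r-1\le\sqrt{\ln q}$ holds because $C_h$ is large and, since $k\le M=\log_2\ln n$ and $N=cn$, $r\le C_h(\ln n)^{1/4}\le\sqrt{\ln N}$ once $n\ge n_0$. Thus with conditional probability $\ge 1-\exp(-2\sqrt q)\ge 1-\exp(-2\sqrt N)$ one has $\|b\|^2\le 16q\,e^{-(r-1)^2/4}$, and on that event $|\BadBlocks(k,1)|\le|\{(i,t):g_{i,t}>r\}|\le\|b\|^2$ while $\|\BStat(k,1)\|^2\le 2^{-k}\|b\|^2$. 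Since $(r-1)^2\ge r^2-2r$ with $r^2=C_h^22^{(k-1)/2}$, an elementary computation — valid uniformly in $1\le k\le M$ once $c$ (hence $C_h$) is small/large enough and then $n_0$ is large enough — gives $2^{-k}\cdot16q\,e^{-(r-1)^2/4}\le 64N\,e^{-C_h^22^{k/2}/16}$ and $16q\,e^{-(r-1)^2/4}\le N\,e^{-C_h^22^{k/2}/16}$, i.e.\ exactly the two bounds in the statement. Integrating over $\mathcal F$ and using $\Event_{k-1}\in\mathcal F$ yields the claimed probability, with loss $\exp(-2\sqrt q)\le 2\exp(-2\sqrt N)$.

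The step I expect to be the main obstacle is the conditioning in the first paragraph: one must make precise and justify that, after conditioning on the coarser grid, the quantities $\langle\bar n_{k-1},u(t)\rangle$ form a family of conditionally i.i.d.\ centered Gaussians, which uses simultaneously the $\mathcal F$-measurability of both $\bar n_{k-1}$ and $\Event_{k-1}$ and the mutual independence of Brownian bridges over the disjoint subintervals. The remainder — the block-by-block decomposition of $\BStat_i(k,1)$ on $\Event_{k-1}$ and the final numerical check — is routine.
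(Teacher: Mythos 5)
Your proof is correct and follows essentially the same route as the paper: interpolate each new grid point between its $(k-1)$-grid neighbours, use the measurability of $\bar n_{k-1}$ (and of $\Event_{k-1}$) with respect to the coarse grid to get conditionally i.i.d.\ Gaussian deviations of variance at most $2^{-k}a_i$, and feed the resulting truncated Gaussians into Lemma~\ref{norm of noncentered gaussian}. The only difference is cosmetic: the paper controls $|\BadBlocks(k,1)|$ by Hoeffding's inequality applied to exceedance indicators, whereas you absorb the count into $\|b\|^2$ by shifting the threshold from $r$ to $r-1$; both yield the stated bounds.
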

\begin{proof}
Let $i>0$ so that $I^i_k\neq \emptyset$.
For each $t\in I^i_k\setminus I^i_{k-1}$, let $t_L$ be the maximal number in $\{a_i\}\cup I^i_{k-1}$ strictly less
than $t$ (``left neighbour'') and, similarly, $t_R$ be the minimal number in $I^i_{k-1}\cup\{a_{i+1}\}$
strictly greater than $t$ (``right neighbour''). For every such $t$, let
$$w_t:=\frac{t_R-t}{t_R-t_L}\B_n(t_L)+\frac{t-t_L}{t_R-t_L}\B_n(t_R);\;\;u_t:=\B_n(t)-w_t.$$
It is not difficult to see that
\begin{align*}
\bigl\langle &\bar n_{k,0},\frac{\B_n(a_i)-w_t}{\sqrt{a_i}}\bigr\rangle\\
&\le \max\Bigl(\bigl\langle \bar n_{k,0},\frac{\B_n(a_i)-\B_n(t_L)}{\sqrt{a_i}}\bigr\rangle,
\bigl\langle \bar n_{k,0},\frac{\B_n(a_i)-\B_n(t_R)}{\sqrt{a_i}}\bigr\rangle\Bigr)\\
&\le \max\Bigl(0,
\max\limits_{\tau\in I^i_{k-1}}\bigl\langle \bar n_{k,0},\frac{\B_n(a_i)-\B_n(\tau)}{\sqrt{a_i}}\bigr\rangle,
\bigl\langle 2\bar n_{k,0},\frac{\B_n(a_i)-\B_n(a_{i+1})}{\sqrt{a_{i+1}}}\bigr\rangle\Bigr).
\end{align*}
Hence, the $i$-th block statistic (for $i=0,1,\dots,N$) can be (deterministically) bounded as
\begin{align*}
\BStat_i(k,1)\le\max\Bigl(0,
&\max\limits_{t\in I^i_{k-1}}\bigl\langle \bar n_{k,0},
\frac{\B_n(a_i)-\B_n(t)}{\sqrt{a_i}}\bigr\rangle-h(k,1),\Bigr.\\
&\max\limits_{t\in I^i_{k}\setminus I^i_{k-1}}\bigl\langle \bar n_{k,0},
\frac{\B_n(a_i)-w_t}{\sqrt{a_i}}\bigr\rangle-h(k,1)
+\max\limits_{t\in I^i_k\setminus I^i_{k-1}}\bigl\langle\bar n_{k,0},\frac{-u_t}{\sqrt{a_i}}\bigr\rangle,\Bigr.\\
\Bigl.&\bigl\langle \bar n_{k,0},\frac{\B_n(a_i)-\B_n(a_{i+1})}{\sqrt{a_{i+1}}}\bigr\rangle+f(k,1)\Bigr)\\
&\hspace{-1.8cm}\le\max\Bigl(0,
\max\limits_{t\in I^i_{k-1}}\bigl\langle \bar n_{k,0},
\frac{\B_n(a_i)-\B_n(t)}{\sqrt{a_i}}\bigr\rangle-h(k,0),\Bigr.\\
\Bigl.&\bigl\langle 2\bar n_{k,0},\frac{\B_n(a_i)-\B_n(a_{i+1})}{\sqrt{a_{i+1}}}\bigr\rangle+2f(k,0)\Bigr)\\
&\hspace{-1cm}+\max\Bigl(0,\max\limits_{t\in I^i_k\setminus I^i_{k-1}}\bigl\langle\bar n_{k,0},\frac{-u_t}{\sqrt{a_i}}\bigr\rangle+h(k,0)-h(k,1)\Bigr).
\end{align*}
Let us denote the first summand in the last estimate by $\xi_i$, so that
$$\BStat_i(k,1)\le \xi_i+\max\Bigl(0,\max\limits_{t\in I^i_k\setminus I^i_{k-1}}\bigl\langle\bar n_{k,0},
\frac{-u_t}{\sqrt{a_i}}\bigr\rangle+h(k,0)-h(k,1)\Bigr).$$
Note that
\begin{equation}\label{BStat lem aux 1}
\Event_{k-1}=\bigl\{\xi_i=0\mbox{ for all }i=0,1,\dots,N\bigr\}.
\end{equation}
Further, the property \eqref{nk prop 2} of the vector $\bar n_{k,0}=\bar n_{k-1}$, together with Lemma~\ref{brownian-segment-split}
and properties of the Brownian motion, imply that
the Gaussian variables $\bigl\langle\bar n_{k,0},\frac{-u_t}{\sqrt{a_i}}\bigr\rangle$
are {\it jointly} independent for $t\in I^i_k\setminus I^i_{k-1}$, $i=1,2,\dots,N$, and the variance of each one
can be estimated from above by $2^{1-k}$. Thus, the vector $\BStat(k,1)$ can be majorized coordinate-wise
by the vector
$$\bigl(\xi_i+\max\limits_{t\in I^i_k\setminus I^i_{k-1}}(0,2^{(1-k)/2}\gamma_{t}+h(k,0)-h(k,1))\bigr)_{i=0}^N,$$
where $\gamma_t$ ($t\in I^i_k\setminus I^i_{k-1}$, $i=0,1,\dots,N$) are i.i.d.\ standard Gaussians
(in fact, appropriate scalar multiples of $\bigl\langle\bar n_{k,0},\frac{-u_t}{\sqrt{a_i}}\bigr\rangle$).
Denoting by $\gamma$ the standard Gaussian variable, we get from the definition of $h$:
\begin{align*}
\P\bigl\{\max\limits_{t\in I^i_k\setminus I^i_{k-1}}(0,2^{(1-k)/2}\gamma_{t}+h(k,0)-h(k,1))>0\bigr\}
&\le 2^k\P\{\gamma>C_h2^{k/4}/2\}\\
&\le 2^k\exp(-{C_h}^2 2^{k/2}/8)\\
&\le\frac{1}{2}\exp(-{C_h}^2 2^{k/2}/16).
\end{align*}
(In the last two inequalities, we assumed that $C_h$ is sufficiently large).
Applying Hoeffding's inequality to corresponding indicators, we infer
$$|\BadBlocks(k,1)|\le |\{i:\,\xi_i\neq 0\}|+N\exp(-{C_h}^2 2^{k/2}/16)$$
with probability at least $1-\exp(-2\sqrt{N})$ (we note that, in view of the inequality $k\le M$, we have
$\frac{1}{2}\exp(-{C_h}^2 2^{k/2}/16)\ge N^{-1/4}$).
Next, it is not hard to see that the Euclidean norm of $\BStat(k,1)$ is majorized (deterministically) by the sum
$$\bigl\|(\xi_i)_{i=0}^N\bigr\|+2^{(1-k)/2}\bigl\|\bigl(\max(0,\gamma_t-C_h2^{k/4}/2)\bigr)_{t}\bigr\|,$$
with the second vector having $\sum_{i=0}^N|I^i_k\setminus I^i_{k-1}|\le 2^kN$ coordinates.
Applying Lemma~\ref{norm of noncentered gaussian} to the second vector
(note that for sufficiently large $n$ we have $C_h2^{k/4}/2\le\sqrt{\ln N}$), we get
$$\|\BStat(k,1)\|\le \bigl\|(\xi_i)_{i=0}^N\bigr\|+\frac{8 \sqrt{N}}{\exp({C_h}^2 2^{k/2}/32)}$$
with probability at least $1-\exp(-2\sqrt{N})$.
Combining the estimates with \eqref{BStat lem aux 1}, we obtain the result.
\end{proof}

\begin{lemma}[Subsequent substeps for block statistics]\label{BStat subsequent substeps lemma}
Fix any $1\le k\le M$ and $1<\ell\le M'+1$ and assume that the random unit vectors
$\bar n_{k,\ell-2}$ and $\bar \Delta_{k,\ell-1}$ satisfying properties \eqref{nkl prop 1}---\eqref{nkl prop 2}
and \eqref{deltakl prop 1}---\eqref{deltakl prop 2}---\eqref{deltakl prop 3}, respectively,
have been constructed, and $\bar n_{k,\ell-1}$ is defined according to formula \eqref{def of nkl via delta}. Then
\begin{align*}
\P&\Bigl\{|\BadBlocks(k,\ell)|\le N\exp(-{C_h}^2 2^{(k+\ell)/2})\mbox{ and }
\|\BStat(k,\ell)\|\le\frac{\sqrt{N}}{\exp({C_h}^2 2^{(k+\ell)/2})}\Bigr\}\\
&\ge \P(\Event_{k,\ell-1})-2\exp(-2\sqrt{N}).
\end{align*}
Moreover,
$$\P\bigl\{\BadBlocks(k,\ell)\neq\emptyset\bigr\}\le N\exp(-{C_h}^2/\alpha_{k,\ell-1})+1-\P(\Event_{k,\ell-1}).$$
\end{lemma}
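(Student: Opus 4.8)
\emph{Proof plan.}
The plan is to control $\BStat(k,\ell)$ on the blocks that are still ``good'' after substep $\ell-1$, exploiting that the perturbation $\bar\Delta_{k,\ell-1}$ enters $\bar n_{k,\ell-1}$ only through the tiny factor $\alpha_{k,\ell-1}=16^{-k-\ell+1}$, so such a good block can turn bad only through an extremely unlikely large Gaussian deviation. On $\Event_{k,\ell-1}$ we have $\BStat_i(k,\ell)=0$ for every $i\in\BadBlocks(k,\ell-1)$; hence, on that event, $\BStat(k,\ell)$ is supported on $\{0,\dots,N\}\setminus\BadBlocks(k,\ell-1)$ and $\BadBlocks(k,\ell)\cap\BadBlocks(k,\ell-1)=\emptyset$, so it is enough to bound $\BStat_i(k,\ell)$ for $i\notin\BadBlocks(k,\ell-1)$, i.e.\ when $\BStat_i(k,\ell-1)=0$. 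Substituting \eqref{def of nkl via delta} into the definition of $\BStat_i(k,\ell)$ and using $\BStat_i(k,\ell-1)=0$, the identities $h(k,\ell)-h(k,\ell-1)=C_h2^{-(k+\ell)/4}$ and $f(k,\ell-1)-f(k,\ell)=C_f2^{-(k+\ell)/4}$, the relation $C_f=2(1-2^{-1/4})^{-2}C_h$ (so that $\tfrac14 C_f\ge C_h$) together with $f(k,\ell)\le f(0,0)$, and the negligibility of $\alpha_{k,\ell-1}^2$ next to $2^{-(k+\ell)/4}$, I would derive the deterministic estimate
$$
\BStat_i(k,\ell)\le\eta_i:=\max\Bigl(0,\ \alpha_{k,\ell-1}\max_{t\in I^i_k\cup\{a_{i+1}\}}\bigl\langle\bar\Delta_{k,\ell-1},\tfrac{\B_n(a_i)-\B_n(t)}{\sqrt{c_t}}\bigr\rangle-C_h2^{-(k+\ell)/4}\Bigr),
$$
where $c_t:=a_i$ for $t\in I^i_k$ and $c_{a_{i+1}}:=a_{i+1}$ (for $i=0$ only the term $t=a_1$ occurs). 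In particular $\eta_i\neq 0$ forces the inner product above to exceed $\tau:=C_h2^{-(k+\ell)/4}/\alpha_{k,\ell-1}=C_h\,2^{15(k+\ell)/4-4}$.

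Next I would fix $I\subset\{0,\dots,N\}$ with $\P\{\BadBlocks(k,\ell-1)=I\}>0$ and describe the conditional law of $(\eta_i)_{i\notin I}$ given $\{\BadBlocks(k,\ell-1)=I\}$. Since $\bar n_{k,\ell-2}$ is measurable with respect to the $\sigma$-algebra in \eqref{nkl prop 2}, which does not involve $\Proj^k_{\ell-1}\B_n$, the random set $\BadBlocks(k,\ell-1)$ is independent of $\Proj^k_{\ell-1}\B_n$; combining this with the conditional independence \eqref{deltakl prop 3} of $\bar\Delta_{k,\ell-1}$, with the fact that $\bar\Delta_{k,\ell-1}$ is a unit vector of $\R^{J^k_{\ell-1}}$, and with the independence of the Brownian increments over the disjoint blocks $[a_i,a_{i+1}]$, one sees that, conditionally on $\{\BadBlocks(k,\ell-1)=I\}$, the family $\bigl\{\langle\bar\Delta_{k,\ell-1},(\B_n(a_i)-\B_n(t))/\sqrt{c_t}\rangle:\ i\notin I,\ t\in I^i_k\cup\{a_{i+1}\}\bigr\}$ is a centered Gaussian family, all of whose variances are at most $1$, that is jointly independent across the blocks $i\notin I$ (within a block the entries may be correlated). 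Consequently the $\eta_i$, $i\notin I$, are conditionally independent, and the union bound together with \eqref{feller ineq} give $\P\{\eta_i\neq 0\mid\BadBlocks(k,\ell-1)=I\}\le 2^k\,\P\{\gamma>\tau\}\le 2^k e^{-\tau^2/2}$ for $i\notin I$. An elementary computation with $\alpha_{k,\ell-1}=16^{-k-\ell+1}$, $1\le k\le M$ and $\ell\ge 2$ shows $\tau^2/2\ge C_h^2/\alpha_{k,\ell-1}+k\ln 2$ and $\tau^2/2\ge C_h^2\,2^{(k+\ell)/2}$, so that $2^k e^{-\tau^2/2}\le\min\bigl(e^{-C_h^2/\alpha_{k,\ell-1}},\,e^{-C_h^2 2^{(k+\ell)/2}}\bigr)$ with room to spare. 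The ``moreover'' bound is then immediate: on $\Event_{k,\ell-1}$, $\BadBlocks(k,\ell)\neq\emptyset$ forces $\eta_i\neq 0$ for some $i\notin\BadBlocks(k,\ell-1)$, whence
$$
\P\{\BadBlocks(k,\ell)\neq\emptyset\}\le\P\bigl\{\eta_i\neq 0\ \text{for some}\ i\notin\BadBlocks(k,\ell-1)\bigr\}+\bigl(1-\P(\Event_{k,\ell-1})\bigr)\le N e^{-C_h^2/\alpha_{k,\ell-1}}+1-\P(\Event_{k,\ell-1}),
$$
the first term being at most $(N+1)\sup_I 2^k e^{-\tau^2/2}$, which absorbs into $N e^{-C_h^2/\alpha_{k,\ell-1}}$.

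For the quantitative first assertion I would work on $\Event_{k,\ell-1}$ and keep conditioning on $\{\BadBlocks(k,\ell-1)=I\}$. For the cardinality, on $\Event_{k,\ell-1}$ one has $|\BadBlocks(k,\ell)|\le|\{i\notin\BadBlocks(k,\ell-1):\eta_i\neq 0\}|$, and given $\{\BadBlocks(k,\ell-1)=I\}$ the right-hand side is a sum of at most $N+1$ independent $\{0,1\}$-valued random variables with means at most $2^k e^{-\tau^2/2}\le e^{-C_h^2 2^{(k+\ell)/2}}$; Hoeffding's inequality with a deviation of order $N e^{-C_h^2 2^{(k+\ell)/2}}$, exactly as in the proof of Lemma~\ref{BStat initial substep lemma}, gives $|\BadBlocks(k,\ell)|\le N e^{-C_h^2 2^{(k+\ell)/2}}$ with conditional probability at least $1-e^{-2\sqrt N}$ once $n\ge n_0$ (here one uses $2^{(k+\ell)/2}\le\sqrt 2(\ln n)^{5/8}\ll\ln N$). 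For the Euclidean norm, on $\Event_{k,\ell-1}$ one has $\|\BStat(k,\ell)\|^2\le\sum_{i\notin\BadBlocks(k,\ell-1)}\eta_i^2$; estimating the Laplace transform blockwise, exactly as in the proof of Lemma~\ref{norm of noncentered gaussian} (using the conditional independence across blocks and the union bound within each block), one obtains
$$
\E\Bigl[\exp\Bigl(\tfrac{1}{4\alpha_{k,\ell-1}^2}\sum_{i\notin\BadBlocks(k,\ell-1)}\eta_i^2\Bigr)\ \Big|\ \BadBlocks(k,\ell-1)=I\Bigr]\le\exp\bigl((N+1)\,2^k e^{-\tau^2/2}\bigr),
$$
and since $(N+1)2^k e^{-\tau^2/2}\ll 1$ while $\alpha_{k,\ell-1}^{-2}=16^{2(k+\ell-1)}$ is enormous, Markov's inequality with threshold $N e^{-2C_h^2 2^{(k+\ell)/2}}$ gives $\|\BStat(k,\ell)\|\le\sqrt N\,e^{-C_h^2 2^{(k+\ell)/2}}$ with conditional probability at least $1-e^{-2\sqrt N}$ for $n\ge n_0$. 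Averaging over $I$ and intersecting $\Event_{k,\ell-1}$ with the complements of the two exceptional events (each of probability at most $e^{-2\sqrt N}$) yields the first assertion.

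I expect the conditional-distribution step in the second paragraph to be the main obstacle: one must argue carefully that, after conditioning on which blocks are bad at substep $\ell-1$, the pairings of $\bar\Delta_{k,\ell-1}$ with the Brownian increments over the surviving blocks are honest centered Gaussians of variance at most $1$, independent from block to block. This is where \eqref{nkl prop 2} and, crucially, \eqref{deltakl prop 3} are used, together with the elementary fact that the inner product of a fixed unit vector with an isotropic Gaussian vector is a Gaussian whose distribution does not depend on the chosen vector.
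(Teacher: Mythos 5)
Your proposal is correct and follows essentially the same route as the paper's proof: substitute \eqref{def of nkl via delta}, reduce on $\Event_{k,\ell-1}$ to the perturbation terms $\eta_i$ on the previously good blocks, use \eqref{nkl prop 2} and \eqref{deltakl prop 3} to get conditional Gaussianity (variance at most $1$) and independence across blocks given $\BadBlocks(k,\ell-1)=I$, then a union bound within blocks, Hoeffding for the cardinality, an exponential-moment bound for the norm, and the additive loss $1-\P(\Event_{k,\ell-1})$ for the previously bad blocks. The only cosmetic difference is in the norm step: the paper compares the conditional cdf of $(\eta_i)_{i\notin I}$ with that of $2^{k+1}N$ i.i.d.\ truncated Gaussians and invokes Lemma~\ref{norm of noncentered gaussian}, whereas you bound the conditional Laplace transform of $\sum_i\eta_i^2/(4\alpha_{k,\ell-1}^2)$ directly; these are the same estimate in substance.
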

\begin{proof}
To shorten the notation, we shall use $\alpha$ in place of $\alpha_{k,\ell-1}$ within the proof.
Using the definition of $\bar n_{k,\ell-1}$ in terms of $\bar n_{k,\ell-2}$ and $\bar \Delta_{k,\ell-1}$, we get
for every $i=0,1,\dots,N$
\begin{align*}
\BStat_i(k,\ell)=\max\Bigl(&0,
\max\limits_{t\in I^i_k}\bigl\langle \frac{\bar n_{k,\ell-2}+\alpha\bar\Delta_{k,\ell-1}}{\sqrt{1+\alpha^2}},
\frac{\B_n(a_i)-\B_n(t)}{\sqrt{a_i}}\bigr\rangle-h(k,\ell),\Bigr.\\
\Bigl.&\bigl\langle \frac{\bar n_{k,\ell-2}
+\alpha\bar\Delta_{k,\ell-1}}{\sqrt{1+\alpha^2}},\frac{\B_n(a_i)-\B_n(a_{i+1})}{\sqrt{a_{i+1}}}\bigr\rangle+f(k,\ell)\Bigr)\\
&\hspace{-1.4cm}\le\frac{\BStat_i(k,\ell-1)}{\sqrt{1+\alpha^2}}\\
+\max\Bigl(&0,\max\limits_{t\in I^i_k}\bigl\langle \alpha\bar\Delta_{k,\ell-1},
\frac{\B_n(a_i)-\B_n(t)}{\sqrt{a_i}}\bigr\rangle+h(k,\ell-1)-h(k,\ell),\Bigr.\\
\Bigl.&\bigl\langle \alpha\bar\Delta_{k,\ell-1},\frac{\B_n(a_i)-\B_n(a_{i+1})}{\sqrt{a_{i+1}}}\bigr\rangle+\sqrt{1+\alpha^2}f(k,\ell)-
f(k,\ell-1)\Bigr).
\end{align*}
Let us denote the second summand by $\eta_i$ so that
$$\BStat_i(k,\ell)\le\frac{\BStat_i(k,\ell-1)}{\sqrt{1+\alpha^2}}+\eta_i.$$
Fix for a moment any subset $I$ of $\{0,1,\dots,N\}$ such that $\P\{\BadBlocks(k,\ell-1)=I\}>0$.
A crucial observation is that, conditioned on the event $\BadBlocks(k,\ell-1)=I$, the variables
$\eta_i$, $i\notin I$, are {\it jointly} independent. This follows from properties \eqref{deltakl prop 1}, \eqref{deltakl prop 3}
of $\bar\Delta_{k,\ell-1}$ and properties of the Brownian motion.
Next, the same properties tell us that, conditioned on $\BadBlocks(k,\ell-1)=I$,
each variable $\langle\bar\Delta_{k,\ell-1},\frac{\B_n(a_i)-\B_n(t)}{\sqrt{a_i}}\rangle$, $t\in I^i_k$,
and $\langle\bar\Delta_{k,\ell-1},\frac{\B_n(a_i)-\B_n(a_{i+1})}{\sqrt{a_{i+1}}}\rangle$
have Gaussian distributions with variances at most $1$.
Further, note that, by the choice of $\alpha$ and the functions $f$ and $h$,
we have
$$\sqrt{1+\alpha^2}f(k,\ell)-f(k,\ell-1)\le h(k,\ell-1)-h(k,\ell)=-C_h 2^{(-k-\ell)/4}.$$
Thus, denoting by $\gamma$ the standard Gaussian variable, we get
\begin{align}
\P\{\eta_i>0\,|\,\BadBlocks(k,\ell-1)=I\}&\le 2^k\P\{\gamma>\alpha^{-1}C_h 2^{(-k-\ell)/4}\}\nonumber\\
&\le \frac{1}{2}\exp(-{C_h}^2\alpha^{-1}),\;\;i\in\{0,1,\dots,N\}\setminus I.\label{aux 167}
\end{align}
Hence, by Hoeffding's inequality (note that $\exp(-{C_h}^2 2^{(k+\ell)/2})> 2N^{-1/4}$):
$$\P\bigl\{|\{i\notin I:\,\eta_i>0\}|\ge N\exp(-{C_h}^2 2^{(k+\ell)/2})\,|\,\BadBlocks(k,\ell-1)=I\bigr\}\le\exp(-2\sqrt{N}).$$
Next, it is not difficult to see that for any $\tau>0$ and $i\notin I$
\begin{align*}
\P\{\eta_i^2\ge\tau\,|\,\BadBlocks(k,\ell-1)=I\}&\le 2^k\P\{\max(0,\alpha\gamma-C_h 2^{(-k-\ell)/4})^2\ge\tau\}\\
&\le 1-\exp\bigl(-2^{k+1}\P\{\max(0,\alpha\gamma-C_h 2^{(-k-\ell)/4})^2\ge\tau\}\bigr)\\
&\le 1-\P\bigl\{\max(0,\alpha\gamma-C_h 2^{(-k-\ell)/4})^2<\tau\bigr\}^{2^{k+1}}\\
&\le \P\Bigl\{\sum\limits_{j=1}^{2^{k+1}}\max(0,\alpha\gamma_j-C_h 2^{(-k-\ell)/4})^2\ge\tau\Bigr\}\\
&\le\P\Bigl\{\sum\limits_{j=1}^{2^{k+1}}\max(0,\alpha\gamma_j-4\alpha C_h 2^{(k+\ell)/4})^2\ge\tau\Bigr\},
\end{align*}
where $\gamma_j$ ($j=1,2,\dots,2^{k+1}$) are i.i.d.\ copies of $\gamma$. Hence, the conditional cdf of $\|(\eta_i)_{i\notin I}\|$
given $\BadBlocks(k,\ell-1)=I$ majorizes the cdf of
$$\alpha\bigl\|\bigl(\max(0,\gamma_j-4C_h 2^{(k+\ell)/4})\bigr)_{j=1}^{2^{k+1}N}\bigr\|$$
for i.i.d.\ standard Gaussians $\gamma_j$, $j=1,2,\dots,2^kN$. Applying Lemma~\ref{norm of noncentered gaussian}
(note that $4C_h 2^{(k+\ell)/4}\le\sqrt{\ln N}$),
we obtain
\begin{align*}
&\P\Bigl\{\|(\eta_i)_{i\notin I}\|>\frac{\sqrt{N}}{\exp({C_h}^2 2^{(k+\ell)/2})}\,\bigl|\bigr.\,\BadBlocks(k,\ell-1)=I\Bigr\}\\
&\le\P\Bigl\{\bigl\|\bigl(\max(0,\gamma_j-4C_h 2^{(k+\ell)/4})\bigr)_{j=1}^{2^{k+1}N}\bigr\|>
\frac{\alpha^{-1}\sqrt{N}}{\exp({C_h}^2 2^{(k+\ell)/2})}\,\bigl|\bigr.\,\BadBlocks(k,\ell-1)=I\Bigr\}\\
&\le\P\Bigl\{\bigl\|\bigl(\max(0,\gamma_j-4C_h 2^{(k+\ell)/4})\bigr)_{j=1}^{2^{k+1}N}\bigr\|>
\frac{4\sqrt{2^{k+1}N}}{\exp(2{C_h}^2 2^{(k+\ell)/2})}\,\bigl|\bigr.\,\BadBlocks(k,\ell-1)=I\Bigr\}\\
&\le \exp(-2\sqrt{N}).
\end{align*}
Now, clearly $\BStat_i(k,\ell-1)=0$ for all $i\notin I$ given $\BadBlocks(k,\ell-1)=I$.
Hence, the above estimates give
\begin{align*}
\P\Bigl\{&|\BadBlocks(k,\ell)|\ge N\exp(-{C_h}^2 2^{(k+\ell)/2})\Bigr.\\
\Bigl.&\mbox{or }\|\BStat(k,\ell)\|>\frac{\sqrt{N}}{\exp({C_h}^2 2^{(k+\ell)/2})}\,\bigl|\bigr.\,\BadBlocks(k,\ell-1)=I\Bigr\}\\
&\le \P\bigl\{\BStat_i(k,\ell)>0\mbox{ for some }i\in I\,|\,\BadBlocks(k,\ell-1)=I\bigr\}+2\exp(-2\sqrt{N}).
\end{align*}
Now, summing over all admissible subsets $I$, we get
\begin{align*}
\P&\Bigl\{|\BadBlocks(k,\ell)|\ge N\exp(-{C_h}^2 2^{(k+\ell)/2})
\mbox{ or }\|\BStat(k,\ell)\|>\frac{\sqrt{N}}{\exp({C_h}^2 2^{(k+\ell)/2})}\Bigr\}\\
&\le 2\exp(-2\sqrt{N})\\
&\hspace{1cm}+\sum\limits_I \P\bigl\{\BStat_i(k,\ell)>0\mbox{ for some }i\in I\,|\,\BadBlocks(k,\ell-1)=I\bigr\}\P\{\BadBlocks(k,\ell-1)=I\}\\
&=2\exp(-2\sqrt{N})+\P\bigl\{\BStat_i(k,\ell)>0\mbox{ for some }i\in \BadBlocks(k,\ell-1)\bigr\}\\
&=2\exp(-2\sqrt{N})+1-\P(\Event_{k,\ell-1}).
\end{align*}
By analogous argument, as a corollary of \eqref{aux 167},
$$\P\bigl\{\BadBlocks(k,\ell)\neq\emptyset\bigr\}\le N\exp(-{C_h}^2\alpha^{-1})+1-\P(\Event_{k,\ell-1}).$$
\end{proof}

\begin{lemma}[Construction of $\bar\Delta_{k,\ell}$]\label{construction of deltakl}
Let $1\le k\le M$ and $1\le \ell\le M'$ and assume that the random unit vector
$\bar{n}_{k,\ell-1}$ satisfying properties \eqref{nkl prop 1} and \eqref{nkl prop 2}
has been constructed.
Then one can construct a random unit vector $\bar \Delta_{k,\ell}$ satisfying properties
\eqref{deltakl prop 1}---\eqref{deltakl prop 2}---\eqref{deltakl prop 3} and such that
$$\P(\Event_{k,\ell})\ge \P(\Event_{k,\ell-1})-3\exp(-\sqrt{N})$$
if $\ell>1$, or
$$\P(\Event_{k,\ell})\ge \P(\Event_{k-1})-3\exp(-\sqrt{N})$$
if $\ell=1$.
\end{lemma}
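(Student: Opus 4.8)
\emph{Plan of the proof.} The vector $\bar\Delta_{k,\ell}$ will be obtained from a single application of Lemma~\ref{normal-vector-lem}, fed with those Brownian increments that live in the fresh coordinate block $\R^{J^k_\ell}$ and belong to the ``bad'' blocks. Write $\alpha:=\alpha_{k,\ell}=16^{-k-\ell}$. Since $J^k_\ell$ is disjoint from every $J^p_q$ with $(p,q)\precsim(k,\ell-1)$, the vector $\bar n_{k,\ell-1}$ is orthogonal to $\R^{J^k_\ell}$, so \eqref{def of nkl via delta} gives, for all $v\in\R^n$,
$$
\langle\bar n_{k,\ell},v\rangle=\frac{1}{\sqrt{1+\alpha^2}}\Bigl(\langle\bar n_{k,\ell-1},v\rangle+\alpha\,\langle\bar\Delta_{k,\ell},\Proj^k_\ell v\rangle\Bigr).
$$
Fix a block $i$ and list $\{a_i\}\cup I^i_k\cup\{a_{i+1}\}$ as $a_i=t_0<t_1<\dots<t_{2^k}=a_{i+1}$; put $\delta_j:=t_j-t_{j-1}$ and $\hat Y_{i,j}:=\delta_j^{-1/2}\Proj^k_\ell(\B_n(t_j)-\B_n(t_{j-1}))$, so that $\B_n(t_j)-\B_n(a_i)=\sum_{j'\le j}\delta_{j'}^{1/2}\hat Y_{i,j'}$. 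I claim it suffices to arrange, for every bad block $i$,
$$
\langle\bar\Delta_{k,\ell},\hat Y_{i,1}\rangle\ge\frac{2\sqrt{a_{i+1}}}{\alpha\sqrt{\delta_1}}\,\BStat_i(k,\ell),\qquad
\langle\bar\Delta_{k,\ell},\hat Y_{i,j}\rangle\ge0\ \ (2\le j\le 2^k)
$$
(for $i=0$, where $I^0_k=\emptyset$ and $\delta_1=a_1=1$, only the first, degenerate, constraint appears). Indeed, under these inequalities each partial sum obeys $\langle\bar\Delta_{k,\ell},\B_n(t_j)-\B_n(a_i)\rangle\ge 2\alpha^{-1}\sqrt{a_{i+1}}\,\BStat_i(k,\ell)$; inserting this into the identity above together with the elementary bounds $\langle\bar n_{k,\ell-1},\B_n(t)-\B_n(a_i)\rangle\ge-\sqrt{a_i}\bigl(h(k,\ell)+\BStat_i(k,\ell)\bigr)$ for $t\in I^i_k$ and $\langle\bar n_{k,\ell-1},\B_n(a_{i+1})-\B_n(a_i)\rangle\ge\sqrt{a_{i+1}}\bigl(f(k,\ell)-\BStat_i(k,\ell)\bigr)$ (both read off from the definition of $\BStat_i(k,\ell)$), and using $h(k,\ell)\le h(k,\ell+1)$ and $\tfrac{1}{\sqrt{1+\alpha^2}}f(k,\ell)\ge(1-\alpha^2)f(k,\ell)\ge f(k,\ell+1)$ — the last step because $f\le f(0,0)$ while $\alpha^2=2^{-8(k+\ell)}\ll C_f2^{-(k+\ell+1)/4}=f(k,\ell)-f(k,\ell+1)$ — one obtains $\langle\bar n_{k,\ell},\B_n(t)-\B_n(a_i)\rangle\ge-\sqrt{a_i}\,h(k,\ell+1)$ for $t\in I^i_k$ and $\langle\bar n_{k,\ell},\B_n(a_{i+1})-\B_n(a_i)\rangle\ge\sqrt{a_{i+1}}\,f(k,\ell+1)$, i.e.\ $\BStat_i(k,\ell+1)=0$.

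Now I realise these inequalities. Let $\mathcal G$ be the $\sigma$-algebra of property \eqref{nkl prop 2}; then $\bar n_{k,\ell-1}$, $\BStat(k,\ell)$, $\BadBlocks(k,\ell)$ and the $\delta_j$ are $\mathcal G$-measurable, while the vectors $\hat Y_{i,j}$ ($i\le N$, $1\le j\le 2^k$) are i.i.d.\ standard Gaussian in $\R^{J^k_\ell}\cong\R^d$ with $d=|J^k_\ell|=c_Jn2^{-(k+\ell)/8}$, independent of $\mathcal G$, and — being increments over disjoint time intervals — independent across $i$. Let $\mathcal F$ be the event produced by Lemma~\ref{BStat subsequent substeps lemma} when $\ell>1$ (legitimate inside the inductive construction, where $\bar n_{k,\ell-1}$ is defined via \eqref{def of nkl via delta}) or by Lemma~\ref{BStat initial substep lemma} when $\ell=1$, on which $|\BadBlocks(k,\ell)|$ and $\|\BStat(k,\ell)\|$ satisfy the stated bounds, super-exponentially small in $C_h$. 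On $\mathcal F$ the total number of constraints $m\le 2^k|\BadBlocks(k,\ell)|+1$ is $\le d/2$ once $n\ge n_0$ (the super-exponential decay swallows the polylogarithmic factors $2^k$ and $2^{(k+\ell)/8}$), and, using $\delta_1\ge a_i2^{-k}\ln 2$ and $a_{i+1}=2a_i$ for $i\ge1$,
$$
\sum_{i\in\BadBlocks(k,\ell)}\Bigl(\frac{2\sqrt{a_{i+1}}}{c_{\ref{normal-vector-lem}}\sqrt d\,\alpha\sqrt{\delta_1}}\,\BStat_i(k,\ell)\Bigr)^2\;\le\;\frac{C_0\,2^k}{c_{\ref{normal-vector-lem}}^2\,d\,\alpha^2}\,\|\BStat(k,\ell)\|^2\;\le\;1
$$
for an absolute constant $C_0$; the final inequality is exactly what the relation \eqref{relation between constants} (with $f(1,0)\ge C_f=2(1-2^{-1/4})^{-2}C_h$ and $n\ge n_0$) was set up to yield. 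Choose any $b\in\S^{m-1}$ whose ``first increment'' coordinates dominate $\tfrac{2\sqrt{a_{i+1}}}{c_{\ref{normal-vector-lem}}\sqrt d\,\alpha\sqrt{\delta_1}}\BStat_i(k,\ell)$ — possible by the displayed budget — and let $\bar\Delta_{k,\ell}$ be the random unit vector that Lemma~\ref{normal-vector-lem} associates (conditionally on $\mathcal G$, on which $b$ is fixed and the relevant $\hat Y_{i,j}$ are i.i.d.\ standard Gaussian) to these data on $\mathcal F\cap\{\BadBlocks(k,\ell)\ne\emptyset\}$, and an arbitrary fixed unit vector of $\R^{J^k_\ell}$ otherwise; by Lemma~\ref{normal-vector-lem}, off an event of probability $\le\exp(-c_{\ref{normal-vector-lem}}d)$ all the required inequalities hold.

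Finally, the four properties and the probability estimate. Property \eqref{deltakl prop 1} holds by construction, and \eqref{deltakl prop 2} because $\bar\Delta_{k,\ell}$ is a Borel function of the $\mathcal G$-measurable data and of the $\hat Y_{i,j}$, which are measurable with respect to $\Proj^k_\ell\B_n(t)$, $t\in\{a_1,\dots,a_{N+1}\}\cup I^1_k\cup\dots\cup I^N_k$. For \eqref{deltakl prop 3}: on the $\mathcal G$-event $\{\BadBlocks(k,\ell)=I\}$ the vector $\bar\Delta_{k,\ell}$ depends only on $\mathcal G$ and on $\{\hat Y_{i,j}:i\in I\}$, and conditionally on $\mathcal G$ the families $\{\hat Y_{i,j}:i\in I\}$ and $\{\hat Y_{i,j}:i\notin I\}$ are independent, whence $\bar\Delta_{k,\ell}$ is conditionally independent of $\{\Proj^k_\ell(\B_n(t)-\B_n(a_i)):t\in I^i_k\cup\{a_{i+1}\},\ i\notin I\}$, the latter being determined by the second family. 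On $\mathcal F$ intersected with the success event of Lemma~\ref{normal-vector-lem}, the first paragraph gives $\BStat_i(k,\ell+1)=0$ for all $i\in\BadBlocks(k,\ell)$, i.e.\ $\Event_{k,\ell}$ holds, so
\begin{align*}
\P(\Event_{k,\ell})&\ge\P(\mathcal F)-\exp(-c_{\ref{normal-vector-lem}}d)\\
&\ge\P(\Event_{k,\ell-1})-2\exp(-2\sqrt N)-\exp(-c_{\ref{normal-vector-lem}}d)\ge\P(\Event_{k,\ell-1})-3\exp(-\sqrt N)
\end{align*}
for $n\ge n_0$ (since $d=c_Jn2^{-(k+\ell)/8}\gg\sqrt N$), and the $\ell=1$ case is identical with $\P(\Event_{k-1})$ in place of $\P(\Event_{k,\ell-1})$. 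I expect the main obstacle to be the first paragraph: the precise bookkeeping of the $\sqrt{1+\alpha^2}$ distortions and of the shifts $h(k,\ell)\to h(k,\ell+1)$, $f(k,\ell)\to f(k,\ell+1)$, so that the single scalar target $2\alpha^{-1}\sqrt{a_{i+1}}\,\BStat_i(k,\ell)$ does the job — hand in hand with the passage from the correlated vectors $\B_n(t)-\B_n(a_i)$ ($t\in I^i_k$) to the independent increments $\hat Y_{i,j}$ demanded by Lemma~\ref{normal-vector-lem}, a passage that forces the ``first increment carries the whole correction'' device and the corresponding factor $2^k$ absorbed by \eqref{relation between constants}.
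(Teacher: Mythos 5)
Your proposal is correct and follows essentially the same route as the paper: fix the $\sigma$-algebra that determines $\BStat(k,\ell)$ and $\BadBlocks(k,\ell)$, use Lemmas~\ref{BStat initial substep lemma}/\ref{BStat subsequent substeps lemma} to guarantee few bad blocks and a small $\|\BStat(k,\ell)\|$, apply Lemma~\ref{normal-vector-lem} to the $\Proj^k_\ell$-increments inside the bad blocks (so that \eqref{deltakl prop 1}--\eqref{deltakl prop 3} hold), and then carry out the same $f,h,\alpha_{k,\ell}$ bookkeeping to conclude $\BStat_i(k,\ell+1)=0$ on the bad blocks. The only deviations are cosmetic: the paper spreads the target equally over all $2^k$ increments of a bad block (coordinates $2^{-k/2}\BStat_i(k,\ell)/\|\BStat(k,\ell)\|$) rather than loading the first increment, it splits cases by $|I|$ rather than by your event $\mathcal F$, and the decisive budget inequality is really supplied by the exponential smallness of $\|\BStat(k,\ell)\|$ on the good event (i.e.\ $C_h$ large) rather than by \eqref{relation between constants}, which the paper invokes only for the initial vector $\bar n_0$.
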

\begin{proof}
Fix for a moment any subset $I\subset\{0,1,\dots,N\}$ such that the event
$$\Event_I=\{\BadBlocks(k,\ell)=I\}$$
has a non-zero probability.
If $|I|>N\exp(-{C_h}^2 2^{(k+\ell)/2}/32)$ then define a ``random'' vector $\bar \Delta_{k,\ell}^I$
on $\Event_I$ by setting $\bar \Delta_{k,\ell}^I:=u$ for a fixed unit vector $u\in\R^{J^k_\ell}$.
Otherwise, if $|I|\le N\exp(-{C_h}^2 2^{(k+\ell)/2}/32)$, we proceed as follows:

For each $i\in I\setminus\{0\}$, define $2^k$ ``increments'' on $\Event_I$:
$$
X_{i,p}:= \frac{\Proj_\ell^k\bigl(\B_n(t_{i,p+1})- \B_n(t_{i,p})\bigr)}{\sqrt{t_{i,p+1}-t_{i,p}}},\;\;p=0,1,\dots,2^k-1,
$$
where $t_{i,p}= 2^{i-1+p2^{-k}}$ for $p=0,1,\dots,2^k$. Additionally, if $0\in I$, then define
$$X_{0,0}:=\Proj_\ell^k\B_n(1).$$
Let us denote by $T_I$ the set of all pairs of indices $(i,p)$ corresponding to the ``increments'' $X_{i,p}$.
Note that the process $\Proj^k_\ell\B_n(t)$ is independent from $\Event_I$; in particular,
$\{X_{i,p},\;(i,p)\in T_I\}$ is a collection of standard Gaussian vectors on $\Event_I$ with values in 
$\R^{J_\ell^k}$, such that all $X_{i,p}$ and the vector $\BStat(k,\ell)$ are 
{\it jointly independent} given $\Event_I$.
Let us define a random vector $\tilde b^I\in\R^{T_I}$ on $\Event_I$ by
$$\tilde b^I_{i,p}=\begin{cases}2^{-k/2}\BStat_i(k,\ell)/\|\BStat(k,\ell)\|,&\mbox{if }\BStat(k,\ell)\neq {\bf 0};\\
0,&\mbox{otherwise.}\end{cases}$$
It is easy to see that $\|\tilde b^I\|\le 1$ (deterministically) and that
$$|T_I|\le 2^k|I|\le 2^k N\exp(-{C_h}^2 2^{(k+\ell)/2}/32)\le \frac{1}{2}|J^k_\ell|.$$
(In the last estimate, we used the assumption that $C_h$ is a large constant).
Hence, in view of Lemma~\ref{normal-vector-lem}, there exists a random unit vector
$\bar\Delta_{k,\ell}^I\in\R^{J_\ell^k}$ on $\Event_I$ (which is a Borel function of $X_{i,p}$ and $\tilde b^I$) such that
\begin{align*}
\P\bigl\{\langle \bar\Delta_{k,\ell}^I,X_{i,p}\rangle
\ge c_{\ref{normal-vector-lem}}\sqrt{|J_\ell^k|}\tilde b^I_{i,p}\mbox{ for all }(i,p)\in T_I\,|\,\Event_I\bigr\}
&\ge 1-\exp(-c_{\ref{normal-vector-lem}}|J_\ell^k|)\\
&\ge 1-\exp(-\sqrt{N}).
\end{align*}
It will be convenient for us to denote by $\tilde \Event_I$ the event
$$\bigl\{\langle \bar\Delta_{k,\ell}^I,X_{i,p}\rangle
\ge c_{\ref{normal-vector-lem}}\sqrt{|J_\ell^k|}\,\tilde b^I_{i,p}\mbox{ for all }(i,p)\in T_I\bigr\}\subset\Event_I.$$

By ``glueing together'' $\bar\Delta_{k,\ell}^I$ for all $I$,
we obtain a random vector $\bar\Delta_{k,\ell}$ on the entire probability space.

Clearly, $\bar\Delta_{k,\ell}$ satisfies properties \eqref{deltakl prop 1} and \eqref{deltakl prop 2}.
Next, on each $\Event_I$ with $\P(\Event_I)>0$ the vector $\bar\Delta_{k,\ell}$ was defined
as a Borel function of $\BStat(k,\ell)$ and $\Proj^k_\ell(\B(t)-\B(\tau))$, $t,\tau\in I^i_k\cup\{a_i,a_{i+1}\}$, $i\in I$, so,
in view of the properties of the Brownian motion, $\bar\Delta_{k,\ell}$ satisfies \eqref{deltakl prop 3}.

Finally, we shall estimate the probability of $\Event_{k,\ell}$.  
Define
\begin{align*}
\Event=\Bigl\{|\BadBlocks(k,\ell)|\le N\exp(-{C_h}^2 2^{(k+\ell)/2}/32)\mbox{ and }
\|\BStat(k,\ell)\|\le\frac{\sqrt{N}}{\exp({C_h}^2 2^{(k+\ell)/2}/64)}\Bigr\}.
\end{align*}
Note that, according to Lemmas~\ref{BStat initial substep lemma} and~\ref{BStat subsequent substeps lemma},
the probability of $\Event$ can be estimated from below by $\P(\Event_{k,\ell-1})-2\exp(-2\sqrt{N})$
for $\ell>1$ and $\P(\Event_{k-1})-2\exp(-2\sqrt{N})$ for $\ell=1$.

Take any subset $I\subset\{0,1,\dots,N\}$ with $|I|\le N\exp(-{C_h}^2 2^{(k+\ell)/2}/32)$ and
such that $\tilde\Event_I\cap \Event\neq\emptyset$, and let $\omega\in\tilde \Event_I\cap\Event$.
If $\BadBlocks(k,\ell)=\emptyset$ at point $\omega$ then, obviously, $\omega\in\Event_{k,\ell}$.
Otherwise, we have
\begin{align*}
\bigl\langle &\bar\Delta_{k,\ell}(\omega),\frac{\B_n(t_{i,p+1})(\omega)- \B_n(t_{i,p})(\omega)}{\sqrt{t_{i,p+1}-t_{i,p}}}
\bigr\rangle\\
&\ge \frac{c_{\ref{normal-vector-lem}}2^{-k/2}\sqrt{|J_\ell^k|}\BStat_i(k,\ell)(\omega)}{\|\BStat(k,\ell)(\omega)\|}
\mbox{ for all }(i,p)\in T_I,
\end{align*}
whence, using the estimate $t_{i,p+1}-t_{i,p}\ge \frac{2^{i-k}}{4}$ ($(i,p)\in T_I$), we obtain for
any $i\in I$ and $t\in I^i_k\cup\{a_{i+1}\}$:
\begin{align*}
\langle &\bar\Delta_{k,\ell}(\omega),\B_n(t)(\omega)-\B_n(a_i)(\omega)\rangle\\
&=\sum_{p:\,t_{i,p}<t}  \langle \bar\Delta_{k,\ell}(\omega),\B_n(t_{i,p+1})(\omega)-\B_n(t_{i,p})(\omega)\rangle\\
&\ge \frac{c_{\ref{normal-vector-lem}}2^{-k-1}\sqrt{a_{i+1}|J_\ell^k|}\BStat_i(k,\ell)(\omega)}{\|\BStat(k,\ell)(\omega)\|}.
\end{align*}
Further,
$$\frac{c_{\ref{normal-vector-lem}}2^{-k-1}\sqrt{|J_\ell^k|}}{\|\BStat(k,\ell)(\omega)\|}
\ge \frac{c_{\ref{normal-vector-lem}}2^{-k-1}\sqrt{c_J n 2^{(-k-\ell)/8}}\exp({C_h}^2 2^{(k+\ell)/2}/64)}{\sqrt{N}}\ge \frac{1}{\alpha_{k,\ell}}.
$$
Using the definition of $\bar n_{k,\ell}$ in terms of $\bar n_{k,\ell-1}$ and $\bar\Delta_{k,\ell}$ and the above estimates,
we get
\begin{align*}
\langle &\bar n_{k,\ell}(\omega),\frac{\B_n(t)(\omega)-\B_n(a_i)(\omega)}{\sqrt{a_i}}\rangle\\
&\ge \frac{-h(k,\ell)-\BStat_i(k,\ell)(\omega)}{\sqrt{1+{\alpha_{k,\ell}}^2}}+
\frac{\alpha_{k,\ell}}{\sqrt{1+{\alpha_{k,\ell}}^2}}\langle \bar \Delta_{k,\ell}(\omega),
\frac{\B_n(t)(\omega)-\B_n(a_i)(\omega)}{\sqrt{a_i}}\rangle\\
&\ge\frac{-h(k,\ell)}{\sqrt{1+{\alpha_{k,\ell}}^2}}\\
&\ge -h(k,\ell+1),\;\;t\in I^i_k,\;\;i\in I,
\end{align*}
and, similarly,
$$\langle \bar n_{k,\ell}(\omega),\frac{\B_n(a_{i+1})(\omega)-\B_n(a_i)(\omega)}{\sqrt{a_{i+1}}}\rangle
\ge \frac{f(k,\ell)}{{\sqrt{1+{\alpha_{k,\ell}}^2}}}\ge f(k,\ell+1),\;\;i\in I.$$
Thus, by the definition of the event $\Event_{k,\ell}$, we get $\omega\in\Event_{k,\ell}$.

The above argument shows that
$$\P(\Event_{k,\ell})\ge\sum\limits_I\P(\tilde \Event_I\cap\Event),$$
where the sum is taken over all $I$ with $|I|\le N\exp(-{C_h}^2 2^{(k+\ell)/2}/32)$.
Finally,
$$\sum\limits_I\P(\tilde \Event_I\cap\Event)\ge\sum\limits_{I}\P(\Event_I\cap\Event)
-\sum\limits_I\P(\Event_I\setminus\tilde\Event_I)\ge\P(\Event)-\exp(-\sqrt{N}),$$
and we get the result.
\end{proof}

\begin{lemma}[$k$-th Step]\label{k step lemma}
Let $1\le k\le M$ and assume that a random unit vector $\bar n_{k-1}$ satisfying properties \eqref{nk prop 1}, \eqref{nk prop 2}
has been constructed. Then there exists a random unit vector $\bar n_{k}$ satisfying \eqref{nk prop 1}---\eqref{nk prop 2}
and such that
$$\P(\Event_k)\ge\P(\Event_{k-1})-\frac{1}{n^2}.$$
\end{lemma}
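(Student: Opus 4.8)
The plan is to carry out the $k$-th step as the sequence of $M'$ substeps indicated above and then extract $\Event_k$ from the block statistics at the (virtual) substep $M'+1$. Set $\bar n_{k,0}:=\bar n_{k-1}$. I would first record the base of the induction: properties \eqref{nk prop 1}, \eqref{nk prop 2} of $\bar n_{k-1}$ are exactly \eqref{nkl prop 1}, \eqref{nkl prop 2} at the index $(k,0)$, where one uses $I^i_{k-1}\subseteq I^i_k$ to pass from the $\sigma$-algebra generated on $\bigcup_i I^i_{k-1}$ to the one generated on $\bigcup_i I^i_k$. Then, for $\ell=1,2,\dots,M'$ in turn, I would apply Lemma~\ref{construction of deltakl} to obtain a random unit vector $\bar\Delta_{k,\ell}$ with properties \eqref{deltakl prop 1}---\eqref{deltakl prop 3}, define $\bar n_{k,\ell}$ by \eqref{def of nkl via delta}, and verify that $\bar n_{k,\ell}$ inherits \eqref{nkl prop 1}, \eqref{nkl prop 2} at index $(k,\ell)$: the normalized combination in \eqref{def of nkl via delta} lies in $\R^{J^k_\ell}+\prod_{(p,q)\precsim(k,\ell-1)}\R^{J^p_q}=\prod_{(p,q)\precsim(k,\ell)}\R^{J^p_q}$ and is measurable with respect to the union of the $\sigma$-algebras attached to $\bar n_{k,\ell-1}$ and $\bar\Delta_{k,\ell}$. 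Setting $\bar n_k:=\bar n_{k,M'}$, the relation $(p,q)\precsim(k,M')$ ranges over exactly $\{0\le p\le k,\ 1\le q\le M'\}$, so $\bar n_k$ satisfies \eqref{nk prop 1} and \eqref{nk prop 2}.

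Next I would identify $\Event_k$ with an event about block statistics of $\bar n_k$. By the monotonicity of $f,h$ and the limit relations $f(k+1,0)=\lim_{\ell\to\infty}f(k,\ell)$, $h(k+1,0)=\lim_{\ell\to\infty}h(k,\ell)$, we have $h(k,M'+1)\le h(k+1,0)$ and $f(k,M'+1)\ge f(k+1,0)$. Comparing the defining inequalities of $\BStat_i(k,M'+1)$ --- evaluated with $\bar n_{k,M'}=\bar n_k$ --- against those defining $\Event_k$, one sees that $\BStat_i(k,M'+1)=0$ for all $i$ implies every inequality in $\Event_k$; hence
\begin{equation}\label{eq-plan-incl}
\{\BStat(k,M'+1)={\bf 0}\}=\{\BadBlocks(k,M'+1)=\emptyset\}\subseteq\Event_k,
\end{equation}
so that $\P(\Event_k)\ge\P\{\BadBlocks(k,M'+1)=\emptyset\}$.

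It then remains to chain the estimates. Iterating Lemma~\ref{construction of deltakl} over $\ell=1,\dots,M'$ gives $\P(\Event_{k,M'})\ge\P(\Event_{k-1})-3M'\exp(-\sqrt N)$. Applying the ``Moreover'' part of Lemma~\ref{BStat subsequent substeps lemma} with $\ell=M'+1$ --- its hypotheses hold since $\bar n_{k,M'-1}$ and $\bar\Delta_{k,M'}$ possess the required properties and $\bar n_{k,M'}$ is defined via \eqref{def of nkl via delta}, and for $n\ge n_0$ one has $M'\ge 1$ --- yields $\P\{\BadBlocks(k,M'+1)\neq\emptyset\}\le N\exp(-{C_h}^2/\alpha_{k,M'})+1-\P(\Event_{k,M'})$. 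Combining with \eqref{eq-plan-incl},
\begin{equation}\label{eq-plan-final}
\P(\Event_k)\ge\P(\Event_{k-1})-3M'\exp(-\sqrt N)-N\exp(-{C_h}^2/\alpha_{k,M'}).
\end{equation}
The two error terms are then absorbed into $n^{-2}$: with $N=cn$ and $M'=\tfrac14\log_2\ln n$, the quantity $3M'\exp(-\sqrt N)$ is at most $\tfrac12 n^{-2}$ once $n_0$ is large (the exponential beats the polynomial factor); and since $\alpha_{k,M'}=16^{-k-M'}$ with $16^{M'}=2^{4M'}=\ln n$, we have $N\exp(-{C_h}^2/\alpha_{k,M'})=cn\,n^{-{C_h}^2 16^{k}}\le cn\,n^{-16{C_h}^2}\le\tfrac12 n^{-2}$, as $C_h$ is a large constant and $k\ge 1$. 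Plugging these into \eqref{eq-plan-final} gives the claim.

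The probabilistic heart of the argument is already packaged in Lemmas~\ref{construction of deltakl} and~\ref{BStat subsequent substeps lemma}; what I expect to demand real care is the bookkeeping --- verifying at each substep that the vector just constructed satisfies the \emph{precise} subspace-containment and measurability hypotheses required to invoke the next lemma (especially the passage from index $k-1$ to $(k,0)$ at the start, and from $(k,M')$ back to $k$ at the end), together with the inclusion \eqref{eq-plan-incl}, which relies on the exact limit relations built into the definitions of $f$ and $h$.
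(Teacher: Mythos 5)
Your proposal is correct and follows essentially the same route as the paper: set $\bar n_{k,0}:=\bar n_{k-1}$, iterate Lemma~\ref{construction of deltakl} with \eqref{def of nkl via delta} through the $M'$ substeps to get $\P(\Event_{k,M'})\ge\P(\Event_{k-1})-3M'\exp(-\sqrt N)$, invoke the ``Moreover'' part of Lemma~\ref{BStat subsequent substeps lemma} at $\ell=M'+1$, and use $16^{M'}=\ln n$ to absorb the error terms into $n^{-2}$. The extra bookkeeping you supply (the index-$(k,0)$ base case and the inclusion $\{\BStat(k,M'+1)={\bf 0}\}\subseteq\Event_k$ via the limit relations for $f,h$) is exactly what the paper asserts more briefly, so there is nothing to correct.
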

\begin{proof}
As before, we set $\bar n_{k,0}:=\bar n_{k-1}$. Consecutively applying Lemma~\ref{construction of deltakl}
and formula \eqref{def of nkl via delta} $M'$ times, we obtain a random unit vector
$\bar n_{k,M'}$ satisfying \eqref{nkl prop 1} and \eqref{nkl prop 2}.
Moreover, the same lemma provides the estimate
$$\P(\Event_{k,\M'})\ge\P(\Event_{k-1})-3M'\exp(-\sqrt{N}).$$
Then, in view of Lemma~\ref{BStat subsequent substeps lemma} and the definition of $M'$, we have
$$\P\bigl\{\BadBlocks(k,M'+1)\neq\emptyset\bigr\}\le N\exp(-{C_h}^2/\alpha_{k,M'})+1-\P(\Event_{k,M'})\le
\frac{1}{n^2}+1-\P(\Event_{k-1}).$$
Combining the above estimate with the definition of $\Event_k$, we get for $\bar n_k:=\bar n_{k,M'}$:
$$\P(\Event_k)\ge \P(\Event_{k-1})-\frac{1}{n^2}.$$
\end{proof}

\begin{proof}[Proof of Theorem~\ref{main result}]
Define a vector $b=(b_0,b_1,\dots,b_N)$ by
$$b_i:=2{c_{\ref{normal-vector-lem}}}^{-1}f(1,0),\;\;i=0,1,\dots,N.$$
In view of the definition of $f$ and the relation \eqref{relation between constants},
we have $\|b\|\le \sqrt{|J^0_1|}$, and, as we have chosen $c$ to be small, $N+1\le |J^0_1|/2$.
Hence, in view of Lemma~\ref{normal-vector-lem}, there exists a random unit vector $\bar n_0\in\R^{J^0_1}$ measurable
with respect to the $\sigma$-algebra generated by vectors $\Proj^0_1(\B_n(a_{i+1})-\B_n(a_i))$, $i=0,1,\dots,N$,
and such that
\begin{align*}
\P(\Event_0)&=
\P\bigl\{\langle \bar n_0,\B_n(a_{i+1})-\B_n(a_i)\rangle \ge f(1,0)\sqrt{a_{i+1}}\mbox{ for all }
i=0,1,\dots,N\bigr\}\\
&\ge\P\Bigl\{\bigl\langle \bar n_0,\frac{\B_n(a_{i+1})-\B_n(a_i)}{\sqrt{a_{i+1}-a_i}}\bigr\rangle \ge 
c_{\ref{normal-vector-lem}}b_i\mbox{ for all }
i=0,1,\dots,N\Bigr\}\\
&\ge 1-\exp(-c_{\ref{normal-vector-lem}}|J^0_1|)\\
&\ge 1-\frac{1}{n^2}.
\end{align*}
Applying Lemma~\ref{k step lemma} $M$ times, we obtain a random unit vector $\bar n_M$ satisfying
\eqref{nk prop 1}-\eqref{nk prop 2} such that
$$\P(\Event_M)\ge 1-\frac{M+1}{n^2}.$$
Note that for any $\omega\in\Event_M$, we have
$$\langle \bar n_M,\B_n(a_{i+1})(\omega)\rangle\ge \langle \bar n_M,\B_n(a_{i+1})(\omega)-\B_n(a_{i})(\omega)\rangle\ge C_f\sqrt{a_{i+1}}$$
and
$$\langle \bar n_M,\B_n(t)(\omega)\rangle \ge \langle \bar n_M,\B_n(a_i)(\omega)\rangle-\frac{C_f}{2}\sqrt{a_i}\ge\frac{C_f}{2}\sqrt{a_i},\;\;
t\in I^i_k$$
for all $i=0,1,\dots,N$.
Hence, denoting $Q:=\{a_1,a_2,\dots,a_{N+1}\}\cup\bigcup_{i=1}^N I^i_k$, we get
\begin{equation}\label{eventM is subset}
\Event_M\subset\Bigl\{\bigl\langle\bar n_M,\frac{\B_n(t)}{\sqrt{t}}\bigr\rangle\ge \frac{C_f}{4},\;\;t\in Q\Bigr\}.
\end{equation}
Now, take any two adjacent (i.e.\ neighbour) points $t_1<t_2$ from $Q$.
Note that, conditioned on a realization of vectors $\B_n(t)$, $t\in Q$, the random process
$$X(s)=
\bigl\langle\bar n_M,\frac{s\B_n(t_2)+(1-s)\B_n(t_1)}{\sqrt{t_2-t_1}}\bigr\rangle
-\bigl\langle \bar n_M,\frac{\B_n(t_1+s(t_2-t_1))}{\sqrt{t_2-t_1}}\bigr\rangle,\;s\in[0,1],$$
is the standard Brownian bridge. Hence (see, for example, \cite[p.~34]{SW}), we have for any $\tau>0$
$$\P\bigl\{X(s)\ge \tau\mbox{ for some }s\in [0,1]\bigr\}=\exp(-2\tau^2).$$
Taking $\tau:=2\sqrt{\ln n}$, we obtain
\begin{align*}
\P\bigl\{\bigl\langle\bar n_M,\B_n(t)\bigr\rangle
&\le\max\bigl(\langle\bar n_M,\B_n(t_1)\rangle,\langle\bar n_M,\B_n(t_2)\rangle\bigr)\bigr.\\
&\bigl.-2\sqrt{t_2-t_1}\sqrt{\ln n}\mbox{ for some }t\in[t_1,t_2]\bigr\}\\
&\hspace{-2cm}\le\frac{1}{n^8}.
\end{align*}
Finally, note that, in view of \eqref{eventM is subset}, everywhere on $\Event_M$ we have
\begin{align*}
&(t_2-t_1)^{-1/2}\max\bigl(\langle\bar n_M,\B_n(t_1)\rangle,\langle\bar n_M,\B_n(t_2)\rangle\bigr)-2\sqrt{\ln n}\\
&\ge \frac{C_f}{4}\sqrt{\frac{t_2}{t_2-t_1}}-2\sqrt{\ln n}\\
&\ge 2^{M/2-2}C_f-2\sqrt{\ln n}\\
&>0.
\end{align*}
Taking the union bound over all adjacent pairs in $Q$ (clearly, $|Q|\le n^2$), we come to the relation
$$\P\bigl\{\langle\bar n_M,\B_n(t)\rangle>0\mbox{ for all }t\in[1,2^{cn}]\bigr\}\ge\P(\Event_M)-\frac{|Q|}{n^8}\ge 1-\frac{1}{n}.$$
\end{proof}

\end{document}